\newtheorem{theorem}{Theorem}[section]
\newtheorem{lemma}[theorem]{Lemma}
\newtheorem{prop}[theorem]{Proposition}
\newtheorem{cor}[theorem]{Corollary}
\theoremstyle{definition}
\newtheorem{definition}[theorem]{Definition}
\newtheorem{example}[theorem]{Example}
\theoremstyle{remark}
\newtheorem{remark}[theorem]{Remark}
\newcommand{\Exp}{\mathbb{E}\,}
\newcommand{\sign}{sgn}
\newcommand{\Tr}{\operatorname{Tr}\,}
\newcommand{\abs}{\operatorname{abs}}
\newcommand{\ind}[1]{\mathlarger{\mathbb{I}}\{#1\}}
\renewcommand{\Im}{\operatorname{Im}}
\begin{document}

\title[Singular Values  of Squares of Elliptic Matrices]{Singular Values Distribution of Squares of Elliptic Random Matrices and Type B Narayana Polynomials}

\author{Nikita Alexeev}
\address{St. Petersburg State University, Russia \and George Washington University, USA}
\email{nikita.v.alexeev{\char'100}gmail.com}

\author{Alexander Tikhomirov}
\address{Komi Science Center of Ural Division of RAS, Russia}
\maketitle

\begin{abstract}
We consider Gaussian elliptic random matrices $X$ of a size $N \times N$ with parameter $\rho$, i.e., matrices whose pairs of entries $(X_{ij}, X_{ji})$ are mutually independent Gaussian vectors, 
$\Exp X_{ij} = 0$, $\Exp X^2_{ij} = 1$ and $\Exp X_{ij} X_{ji} = \rho$. We are interested in the asymptotic distribution of eigenvalues of the matrix $W =\frac{1}{N^2} X^2 X^{*2}$.
We show that this distribution is defined by its moments and we provide  a recurrent relation for these moments.
We prove  that the (symmetrized) asymptotic distribution is determined by its free cumulants, which are Narayana polynomials of type B:
$$c_{2n} = \sum_{k=0}^n {\binom{n}{k}}^2 \rho^{2k}.$$
\end{abstract}

\section{Introduction}
Consider a family of mutually independent real Gaussian vectors  $(X_{ij},X_{ji})$, $1 \leq i \leq j$, such that
\begin{enumerate}
 \item $\Exp X_{ij} = \Exp X_{ji} = 0$ ,
 \item $\Exp X^2_{ij} = \Exp X^2_{ji} = 1$ ,
 \item $\Exp X_{ij} X_{ji} = \rho$ , for any $i \neq j$ and some $-1 < \rho < 1$.
\end{enumerate}
Consider a square $N \times N$ random matrix $X$ with entries $X_{ij}$, for $1 \leq i,j \leq N$. 
This matrix ensemble was introduced by Girko in \cite{girko1985}  and was recently investigated (see, for example, \cite{girko2006strong,naumov2012,goetze2013,nguyen2014}). Such matrices are called  elliptic random   matrices due to the fact that their asymptotic spectral distribution is the uniform distribution inside an ellipse in the complex plane.   

Here we are interested in the singular values distribution of the random matrix $\frac{1}{N}X^2$. Namely, we denote by $\lambda_1 \leq \lambda_2 \leq \dots \leq \lambda_N$ the eigenvalues of the matrix $$W =\frac{1}{N^2} X^2 X^{*2} \; , $$
and define its empirical spectral distribution function by
$$\mathcal{F}_N(x) = \frac{1}{N} \sum_{i = 1} ^N \ind{\lambda_i \leq x} \; , \textrm{ where } \mathlarger{\mathbb{I}} \textrm{ stands for indicator function, }$$
 and we define the expected spectral distribution function by
$$F_N(x) = \Exp \mathcal{F}_N(x) \; . $$
 We are interested in the asymptotic distribution of squared singular values of $\frac{1}{N}X^2$, namely  $F(x) = \lim_{N \to \infty} F_N(x)$.
 Note that the distributions $\mathcal{F}_N, F_n$ and $F$  depend on $\rho$, but this dependency is not reflected in our notations.
 \begin{theorem}
  Let $X$ be an $N \times N$ real Gaussian elliptic random matrix and $F_N(x)$ be its expected spectral distribution function. Then the $\lim_{N \to \infty} F_N(x) = F(x)$ exists, and this distribution function $F(x)$ is uniquely determined by its moments 
  $$M_k(\rho) = \int_{-\infty}^{\infty} x^k d F(x) \; .$$
  The sequence of moments  $M_k(\rho)  = U_{2k}(\rho)$,  where polynomials $U_k(\rho)$ satisfy the recurrent relation:
  \begin{align}
   U_{k+1}(\rho) = \sum_{i =0}^{\left\lfloor\frac{k-1}{2}\right\rfloor} U_{k-2i-1}(\rho) V_{2i+1}(\rho) + \rho \sum_{i=0}^{\left\lfloor\frac{k}{2}\right\rfloor} V_{2i}(\rho) U_{k-2i}(\rho) \, , \nonumber \\  
   V_{k+1}(\rho) = \sum_{i=0}^{\left\lfloor\frac{k}{2}\right\rfloor} U_{2i}(\rho) V_{k-2i}(\rho)+\rho \sum_{i=0}^{\left\lfloor\frac{k-1}{2}\right\rfloor} U_{2i+1}(\rho)V_{k-2i-1}(\rho) \, ,
   \label{eq:rec}
  \end{align}

  with initial conditions $U_0(\rho) = V_0(\rho) =1.$
  \label{th:mom}
 \end{theorem}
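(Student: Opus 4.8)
The plan is to compute the limiting moments combinatorially and then read off the recurrence from a first-return decomposition of non-crossing pairings. Writing $M_k = \lim_{N\to\infty}\frac1N\E\Tr W^k = \lim_{N\to\infty} N^{-(2k+1)}\E\Tr\big[(X^2X^{*2})^k\big]$, I would first expand the trace over the indices of the $4k$ factors and apply the Gaussian Wick (Isserlis) formula, reducing $\E$ of the resulting product of entries to a sum over pair partitions of the $4k$ positions. The letters form the periodic sign word $\sigma=(+,+,-,-)^k$ of length $4k$ (``$+$'' for a factor $X$, ``$-$'' for a factor $X^*$), and each pair $(m,m')$ imposes index constraints carrying a factor $1$ for a direct index match and $\rho$ for a transpose match.

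The first main step is a genus (planarity) estimate: among all pair partitions, only the non-crossing ones contribute $N^{2k+1}$ free index summations, while every crossing pairing and every contribution of diagonal entries is $O(N^{2k})$ and hence vanishes after normalization. Tracking the covariance weights in this planar limit shows that a surviving non-crossing pairing is weighted by $\rho^{s(\pi)}$, where $s(\pi)$ is the number of pairs joining two \emph{equal} letters (two $X$'s or two $X^*$'s); this weight is exactly what is dictated by the second-order moments $\tau(x^2)=\tau(x^{*2})=\rho$ and $\tau(xx^*)=\tau(x^*x)=1$ of the limiting elliptic element $x=\lim X/\sqrt N$. Consequently $M_k=\sum_{\pi}\rho^{s(\pi)}$, the sum over non-crossing pairings of the cyclic word $\sigma$; since a non-crossing pairing of a cycle is non-crossing after cutting it at one point, I may treat $\sigma$ as a linear word.

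To obtain \eqref{eq:rec} I introduce two families of weighted counts indexed by the phase of the underlying period-$4$ pattern: let $U_m(\rho)$ be the $\rho^{s(\cdot)}$-weighted sum over non-crossing pairings of the length-$2m$ window $\sigma_1\sigma_2\cdots\sigma_{2m}$ (phase $+,+,-,-,\dots$) and $V_m(\rho)$ the analogous sum for the shifted window $\sigma_2\sigma_3\cdots\sigma_{2m+1}$ (phase $+,-,-,+,\dots$); then $M_k=U_{2k}(\rho)$ by cutting the cyclic word at position $1$, and $U_0=V_0=1$. The key observation is that the weight $\rho^{s}$ is invariant under the global sign flip $+\leftrightarrow-$, so the four residue classes of starting positions modulo $4$ collapse to only the two types $U$ and $V$. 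A first-return decomposition then finishes the proof: in a non-crossing pairing of such a window the partner $j$ of the first letter must sit at even distance, so it splits the window into an inner block and an outer block that are paired independently; reading off (a) the same/opposite weight of the splitting pair $(1,j)$, (b) the phase and length of the inner block, and (c) the phase and length of the outer block — all determined by the parity of $j$ via the period-$4$ pattern — yields precisely the coupled convolutions in \eqref{eq:rec}, with the factor $\rho$ appearing exactly when the splitting pair joins equal letters.

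Finally, existence of the limit and moment-determinacy follow from a Catalan bound: since all covariance weights lie in $[-1,1]$ and setting $\rho=1$ counts all non-crossing pairings, one has $|M_k|=|U_{2k}(\rho)|\le U_{2k}(1)=C_{2k}\le 4^{2k}$, where $C_{2k}$ is a Catalan number. Thus the moments grow at most geometrically, the sequence $(M_k)$ satisfies Carleman's condition $\sum_k M_k^{-1/(2k)}=\infty$, and the $M_k$ are the moments of a unique compactly supported law $F$; the convergence $F_N\to F$ then follows by the method of moments. I expect the genuine obstacle to be the planar bookkeeping in the second step — verifying uniformly that crossing pairings and diagonal terms are negligible and that the orientation matching produces the clean rule ``$\rho$ per equal-letter pair'' — whereas the recurrence itself is a routine, if delicate, first-return argument once the two phase-types are in place.
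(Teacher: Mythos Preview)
Your proposal is correct and follows essentially the same approach as the paper: the paper likewise expands the trace, applies Wick's formula, uses a genus argument to reduce to planar (non-crossing) chord diagrams weighted by $\rho^{\#\{\text{same-color chords}\}}$ (its black/white coloring is your $+/-$ coloring), and then introduces the two phase-shifted families $\mathcal{U}_k,\mathcal{V}_k$ with a first-chord decomposition to derive \eqref{eq:rec}, invoking the color-flip symmetry via the sets $\overline{\mathcal{U}}_k$. The moment-determinacy argument is also the same Catalan bound $U_{2k}(\rho)\le U_{2k}(1)=C_{2k}\le 16^k$.
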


 The first several moments $M_k(\rho)$ are:
 \begin{align*}
  M_1(\rho) &= 1+\rho^2 \\
  M_2(\rho) &= 3+8\rho^2+3\rho^4 \\
  M_3(\rho) &= 12 + 54 \rho^2 + 54 \rho^4 + 12 \rho^6 \\
  M_4(\rho) &= 55 + 352\rho^2 + 616 \rho^4 +352 \rho^6 + 55 \rho^8
 \end{align*}

 The case $\rho = 0$ is well known, and the corresponding distribution is the so-called Fuss--Catalan distribution  \cite{alexeev2010a,alexeev2010b,liu2011,mlotkowski2012}. Its moments are Fuss--Catalan numbers $\binom{3k}{k}\frac{1}{2k+1}$,  the sequence A001764 in OEIS \cite{oeis}. The case $\rho = 1$ is the case of Hermitian matrices $X = X^*$, and the moments of the corresponding distribution are Catalan numbers with even indices  $\binom{4k}{2k}\frac{1}{2k+1}$. Our case generalizes both of these cases. Note, that
 $$\binom{3k}{k}\frac{1}{2k+1} \leq M_k(\rho) \leq \binom{4k}{2k}\frac{1}{2k+1} \, .$$
 
 Let us consider the symmetrization of the distribution $F(x)$. Namely, let the random variable $\xi \geq 0$ have distribution $F(x)$. Let $\eta = \epsilon \sqrt{\xi}$, where 
 $\epsilon$ is a Rademacher random variable $\Pr\{\epsilon = \pm 1\} = \frac{1}{2}$  independent of $\xi$. The distribution function of $\eta$ is
 $$G(x) = \frac{1}{2} \left(1+\sign(x)F(x^2)\right) \, ,$$
 and its $2k^{th}$ moments are $M_k(\rho)$ and its $(2k+1)^{st}$ moments are $0$.
 
 Let us define free cumulants of any probability distribution $\mu$ with finite support. We start with the Cauchy transform $s_{\mu}(z)$ which is defined by the series
$$
s_{\mu}(z)=\sum_{m=0}^\infty \frac{M_m}{z^{m+1}} \, ,
$$
where $M_m$ are the moments of the measure $\mu$.
Then the $\mathcal{R}$\emph{-transform}, introduced by Voiculescu (see, e.g., \cite{voiculescu1992}), can be defined as the solution of the equation
$$\mathcal{R}_\mu (s_\mu(z))+\frac{1}{s_\mu(z)}=z \, .$$
Then  the free cumulants $\{c_n\}_{n=1}^\infty$  of the measure $\mu$ are given as the coefficients of a power series expansion of $\mathcal{R}_\mu (z)$:
$$\mathcal{R}_\mu (z) = \sum_{n =0}^\infty c_{n+1}z^n \, .$$

\begin{theorem}
 The distribution $G(x)$ is uniquely determined by its free cumulants $c_n(\rho)$. Odd free cumulants are equal to zero, and even free cumulants are Narayana polynomials of type B:
 $$c_{2n}(\rho) = \sum_{k=0}^{n} \binom{n}{k}^2 \rho^{2k} \, . $$
 The $\mathcal{R}$-transform of the distribution $G(x)$ is
 $$\mathcal{R}_G (z) = \frac{1}{z}\left(\frac{1}{\sqrt{\left((\rho^2-1) z^2-1\right)^2-4 z^2}}-1\right) \, .$$
 \label{th:cumul}
\end{theorem}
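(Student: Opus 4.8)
The plan is to turn the recurrence \eqref{eq:rec} into a closed functional equation for the Cauchy transform $s_G$, solve it, and then expand the resulting $\mathcal{R}$-transform. The determinacy clause is essentially free: the moment--cumulant relations form a triangular (hence invertible) polynomial system, so ``determined by its free cumulants'' is equivalent to ``determined by its moments'', which holds for $G$ because it holds for $F$ by Theorem~\ref{th:mom}. I will therefore concentrate on the two explicit formulas. First I would pass to the generating functions $U(t)=\sum_{k\ge0}U_k(\rho)t^k$, $V(t)=\sum_{k\ge0}V_k(\rho)t^k$ and split them into even and odd parts, $U=U_e+U_o$, $V=V_e+V_o$. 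Multiplying \eqref{eq:rec} by $t^{k+1}$ and summing over $k$ collapses the two scalar recurrences to $U-1=tU(V_o+\rho V_e)$ and $V-1=tV(U_e+\rho U_o)$; separating parities yields
\begin{align*}
U_e-1 &= t(U_eV_o+\rho U_oV_e), & U_o &= t(\rho U_eV_e+U_oV_o),\\
V_e-1 &= t(\rho V_eU_o+V_oU_e), & V_o &= t(U_eV_e+\rho U_oV_o).
\end{align*}
The first and third equations have the same right-hand side, so $U_e=V_e=:\phi$; by Theorem~\ref{th:mom} $\phi(t)=\sum_{k\ge0}M_k(\rho)t^{2k}$, hence $s_G(z)=\tfrac1z\,\phi(1/z)$.

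The second step is to eliminate the odd parts. Setting $W=t\phi^2$ and $T=tU_oV_o$, the two right-hand identities become $U_o=\rho W+T$, $V_o=W+\rho T$, so that $T=t(\rho W+T)(W+\rho T)$, while the first identity reads $\phi-1=t\phi S$ with $S:=V_o+\rho U_o=(1+\rho^2)W+2\rho T$. Writing $g:=s_G(z)=t\phi$ (so $t=1/z$), the defining equation $\mathcal{R}_\mu(s_\mu(z))+\frac{1}{s_\mu(z)}=z$ gives $\mathcal{R}_G(g)=z-\tfrac1g=\tfrac{\phi-1}{t\phi}=S$, so it remains to express $S$ through $g$. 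Substituting $\phi=1+gS$, $t=g/\phi$, $W=g\phi$ and $T=\frac{\phi B-1}{2\rho g}$ with $B:=1-(1+\rho^2)g^2$ into the consequence $\phi BT=\rho g(g^2\phi^2+T^2)$ of $T=t(\rho W+T)(W+\rho T)$, all cross terms cancel and, with $X:=\phi B-1$, one is left with $X^2+2X=4\rho^2g^4\phi^2$, that is $\phi^2\bigl(B^2-4\rho^2g^4\bigr)=1$. Since $B^2-4\rho^2g^4=1-2(1+\rho^2)g^2+(1-\rho^2)^2g^4=\bigl((\rho^2-1)g^2-1\bigr)^2-4g^2=:A(g)$, the branch with $\phi(0)=1$ gives $\phi=A(g)^{-1/2}$ and therefore $\mathcal{R}_G(g)=\frac{\phi-1}{g}=\frac1g\bigl(A(g)^{-1/2}-1\bigr)$, which is the asserted expression.

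To read off the cumulants, factor $A(z)=\bigl(1-(1+\rho)^2z^2\bigr)\bigl(1-(1-\rho)^2z^2\bigr)$ and expand via $(1-u)^{-1/2}=\sum_{j\ge0}\binom{2j}{j}(u/4)^j$:
$$A(z)^{-1/2}=\Bigl(\sum_{j\ge0}\binom{2j}{j}\frac{(1+\rho)^{2j}}{4^j}z^{2j}\Bigr)\Bigl(\sum_{l\ge0}\binom{2l}{l}\frac{(1-\rho)^{2l}}{4^l}z^{2l}\Bigr).$$
Because $A$ is even, $\mathcal{R}_G$ is odd, so the odd cumulants vanish and $c_{2n}=[z^{2n}]A(z)^{-1/2}$. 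The Cauchy product above is a classical Legendre-type convolution: combining $\sum_{k=0}^n\binom nk^2x^k=(1-x)^nP_n\!\bigl(\tfrac{1+x}{1-x}\bigr)$ with the Legendre generating function $\sum_{n\ge0}P_n(y)s^n=(1-2ys+s^2)^{-1/2}$ gives $\sum_{n\ge0}\bigl(\sum_{k}\binom nk^2x^k\bigr)s^n=\bigl(1-2(1+x)s+(1-x)^2s^2\bigr)^{-1/2}$, and the specialization $x=\rho^2$, $s=z^2$ is precisely $A(z)^{-1/2}$. Comparing coefficients yields $c_{2n}(\rho)=\sum_{k=0}^n\binom nk^2\rho^{2k}$, the type-B Narayana polynomials.

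I expect the main obstacle to be the middle step: translating the coupled recurrence \eqref{eq:rec} into generating-function identities without a parity or index slip, and then carrying out the elimination so that it genuinely collapses to the clean quadratic $X^2+2X=4\rho^2g^4\phi^2$ --- this cancellation is what makes the radical appear. The remaining points are routine: identifying $B^2-4\rho^2g^4$ with $\bigl((\rho^2-1)g^2-1\bigr)^2-4g^2$, fixing the branch of the square root, and invoking the Legendre identity. Convergence is not an issue, since $M_k(\rho)\le\binom{4k}{2k}/(2k+1)$ forces all the series above to have positive radius of convergence, so the formal identities hold as identities of analytic functions near the relevant point.
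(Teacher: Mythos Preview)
Your proof is correct and follows a genuinely different route from the paper. The paper's argument in Section~\ref{sec:freecumul} is combinatorial: it introduces \emph{atomic} chord diagrams (those that admit no closed interval of length $4l$ followed by a chord endpoint), shows that decomposing an arbitrary diagram into its atomic pieces realises the moment--cumulant relation $M_n=\sum_{\pi\in NC(n)}\prod_{B\in\pi}c_{|B|}$, and then proves a separate lemma identifying the partition function over atomic diagrams on $4n$ vertices with $N^B_n(\rho^2)$, using the auxiliary diagram families $\mathcal{A}_{2k+1},\widetilde{\mathcal{A}}_{2k+1},\mathcal{B}_{2k},\widetilde{\mathcal{B}}_{2k+1}$ and Corollary~\ref{cor:nar}; only afterwards does it quote Reiner's generating function to obtain $\mathcal{R}_G$. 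You go in the opposite direction: you translate the recurrence~\eqref{eq:rec} into a coupled system for the four parity components of $U$ and $V$, eliminate the odd parts to reach the single algebraic relation $\phi^2\bigl(B^2-4\rho^2 g^4\bigr)=1$, read off $\mathcal{R}_G$ directly, and only then extract the cumulants via the Legendre-polynomial generating function. Your route is more compact and bypasses the auxiliary chord-diagram machinery altogether; the paper's route, in exchange, gives a bijective explanation of why type~B structures appear (atomic diagrams play the role of type~B non-crossing partitions). One minor point: your substitution $T=\frac{\phi B-1}{2\rho g}$ requires $\rho\neq 0$, so the intermediate step is not literally valid at $\rho=0$; since the resulting identity $\phi^2 A(g)=1$ has coefficients polynomial in $\rho$, the case $\rho=0$ follows by continuity (or by the one-line check $S=W=g\phi$, giving $\phi(1-g^2)=1=A(g)^{1/2}\phi$ there).
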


It is known that Narayana polynomials of type A
$$N^{A}_n(t) = \sum_{k=1}^n \frac{1}{k}\binom{n-1}{k-1} \binom{n}{k-1} t^k$$
appear as free cumulants of free Bessel law $\pi_{2,t}$ \cite{banica2011} and as moments of Marchenko--Pastur distribution \cite{pastur1967}. For combinatorial aspects of Narayana polynomials of types A and B see \cite{reiner1997}.


 \section{Moments}
 We use the method  of moments to prove Theorem \ref{th:mom}.
 The $k^{th}$ moment of the distribution $F_N(x)$ is
 $$M^{(N)}_k(\rho) = \int x^k dF_N(x) = \frac{1}{N}\Exp(\lambda_1^k+\lambda_2^k+\dots+\lambda_N^k) = \frac{1}{N}\Exp \Tr W^k \, .$$
 The trace of $W^k$ can  be written as
 \begin{align}
   \Tr W^k  =  \frac{1}{N^{2k}}\sum_{(4k)} {\prod_{j=0}^{k-1} X_{i_{4j} i_{4j+1}} X_{i_{4j+1} i_{4j+2}} X_{i_{4j+3} i_{4j+2}} X_{i_{4j+4} i_{4j+3}}} \, ,
   \label{eq:trace}
 \end{align}
 where the sum $\sum_{(4k)}$ is taken over all indices $\{i_0, i_1, \dots, i_{4k}\}$, such that $i_j \in \{1,2,\dots,N\}$ and $i_0 = i_{4k}$ . To compute the expectation of the trace, we will use Wick's formula (see \cite{zvonkin1997} for references).
 \begin{prop}[Wick's Theorem]
 Let $(x_1,x_2, \dots, x_{2n})$ be a zero-mean multivariate  Gaussian vector. Then the expectation
 $$\Exp (x_1x_2\cdots x_{2n-1}) = 0 , $$
 and
 $$\Exp (x_1x_2\cdots x_{2n}) = \sum_{\pi \in \mathcal{P}_{2n}} \prod_{(i,j) \in \pi} \Exp x_i x_j\, , $$
 where the sum $\sum_{\pi \in \mathcal{P}_{2n}}$ is taken over a set $\mathcal{P}_{2n}$ of all partitions of $\{1,2,\dots,2n\}$ into pairs. 
 \end{prop}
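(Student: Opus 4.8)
The plan is to reduce the claim to the classical generating-function identity for multivariate Gaussians and then extract the combinatorial content by reading off a single Taylor coefficient. Write $\Sigma=(\sigma_{jk})$ for the covariance matrix, $\sigma_{jk}=\E x_j x_k$, and recall that the characteristic function of a zero-mean Gaussian vector is $\varphi(t)=\E\exp(i\sum_j t_j x_j)=\exp(-\tfrac12\sum_{j,k}\sigma_{jk}t_jt_k)$. This formula holds for every (possibly singular) $\Sigma$, which is exactly what lets the argument cover degenerate vectors with no extra work. Since all the random variables occurring in \eqref{eq:trace} are polynomials in the entries $X_{ij}$ and hence have all moments finite, the mixed moments are recovered by differentiation, $\E(x_1\cdots x_m)=(-i)^m\,\partial_{t_1}\cdots\partial_{t_m}\varphi(t)\big|_{t=0}$.

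First I would dispose of the odd case. Because $\Sigma$ is symmetric, the exponent $Q(t)=\sum_{j,k}\sigma_{jk}t_jt_k$ is an even function of $t$, hence so is $\varphi$; every derivative of odd total order vanishes at the origin, giving $\E(x_1\cdots x_{2n-1})=0$. For the even case I would expand $\varphi(t)=\sum_{p\ge0}\frac{1}{p!}\bigl(-\tfrac12 Q(t)\bigr)^p$ and observe that $\bigl(-\tfrac12 Q(t)\bigr)^p$ is homogeneous of degree $2p$, so only the $p=n$ term can contribute to the coefficient of the squarefree monomial $t_1t_2\cdots t_{2n}$. Expanding $\bigl(-\tfrac12\sum_{j,k}\sigma_{jk}t_jt_k\bigr)^n$, this monomial (each variable to the first power) is produced exactly by choosing, from the $n$ quadratic factors, terms whose index pairs $\{j,k\}$ form a partition $\pi$ of $\{1,\dots,2n\}$ into pairs.

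The $n!$ orderings of the factors and the two orderings $\sigma_{jk}t_jt_k=\sigma_{kj}t_kt_j$ within each factor combine with the prefactor $\tfrac{1}{n!}(-\tfrac12)^n$ to give overall weight $\prod_{\{j,k\}\in\pi}\sigma_{jk}$ for each pairing; multiplying by $(-i)^{2n}=(-1)^n$ cancels the remaining sign, yielding $\E(x_1\cdots x_{2n})=\sum_{\pi\in\mathcal P_{2n}}\prod_{(i,j)\in\pi}\sigma_{ij}$, as claimed. The only genuinely delicate point is the bookkeeping of that weight: tracking how the factor $2^n$ from the off-diagonal terms and the $n!$ from the factor orderings cancel $\tfrac{1}{n!}(\tfrac12)^n$ exactly, and checking that diagonal terms $\sigma_{jj}t_j^2$ never contribute to a squarefree monomial. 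I would verify this on small $n$ ($n=1,2$) before stating the general count.

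A fully equivalent alternative, which I would mention, avoids generating functions entirely: multivariate Gaussian integration by parts gives $\E(x_1 h)=\sum_{j}\sigma_{1j}\,\E(\partial_{x_j}h)$ for polynomial $h$, and taking $h=x_2\cdots x_{2n}$ produces the recursion $\E(x_1\cdots x_{2n})=\sum_{j=2}^{2n}\sigma_{1j}\,\E(x_2\cdots\widehat{x_j}\cdots x_{2n})$, where $\widehat{x_j}$ denotes omission. Its solution by induction on $n$ is precisely the sum over pair partitions, with the base case $\E x_1=0$ simultaneously delivering the odd statement. This route mirrors the recursive structure of $\mathcal P_{2n}$ most transparently, at the cost of first establishing the integration-by-parts identity; I regard that derivation, rather than the coefficient extraction, as the part most worth writing out carefully.
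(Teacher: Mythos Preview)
Your argument is correct. Both routes you outline---extracting the coefficient of the squarefree monomial $t_1\cdots t_{2n}$ from the Gaussian characteristic function, and the recursion via Gaussian integration by parts---are the standard proofs of Isserlis'/Wick's theorem, and your bookkeeping of the $n!\cdot 2^n$ multiplicity against $\tfrac{1}{n!}(-\tfrac12)^n$ and the sign $(-i)^{2n}$ is right. The observation that the characteristic-function identity holds for singular $\Sigma$ is exactly what is needed here, since in the application the vector $(X_{\mathbf 1},\dots,X_{\mathbf{4k}})$ typically has repeated entries.

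As for comparison: the paper does not supply its own proof of this proposition. It is quoted as a classical tool, with a pointer to \cite{zvonkin1997} for references, and then applied directly to compute $\E\Tr W^k$. So there is nothing to compare your proof against; you have simply filled in a result the paper takes as background. If you wanted to match the paper's level of detail, a one-line citation would suffice; if you want a self-contained write-up, either of your two arguments is fine, and the integration-by-parts version has the advantage that its recursion visibly matches the first-chord decomposition used later in Lemma~\ref{lem:recrel}.
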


 Since the vector $(X_{i_0i_1}, X_{i_1i_2}, \dots, X_{i_0 i_{4k-1}})$ is a zero-mean multivariate Gaussian vector, one can apply Wick's formula to compute $\Exp\Tr W^k$. To do this we will represent a pair partition as a chord diagram on $4k$ vertices in the following way. The $j^{th}$ vertex ($1 \leq j \leq 4k$) of the chord diagram corresponds to the $j^{th}$  factor in the product in \eqref{eq:trace}. The $j^{th}$ vertex is colored white if $j \pmod{4} \in \{0,3\}$, i.e., if the corresponding factor is taken from the transposed matrix, and it is colored black if $j \pmod{4} \in \{1,2\}$. Two vertices are connected by a chord if the corresponding factors are paired. We use $p \overset{d}\sim q$ to denote that the $p^{th}$ and the $q^{th}$ vertices are connected in a diagram $d$. We call a chord diagram {\it planar} if its chords do not cross.
 We denote by $\mathcal{D}_{2k}$ the set of all such chord diagrams on $4k$ vertices and by $\mathcal{D}^0_{2k}$ the set of planar chord diagrams on $4k$ vertices. We say that the weight $\omega(d)$ of a planar chord diagram $d \in \mathcal{D}^0_{2k}$ is equal to $\rho^l$ if it has exactly $l$ chords connecting a pair of vertices of the same color.

 \begin{lemma}
  \begin{align}
   \lim_{N \to \infty}\frac{1}{N} \Exp Tr W^k = \sum_{d \in \mathcal{D}^0_{2k}} \omega(d) \, .
   \label{eq:weights}
  \end{align}

  \label{lem:comb}
 \end{lemma}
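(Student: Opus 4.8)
The plan is to compute $\E\Tr W^k$ via Wick's formula and show that only planar diagrams survive in the large-$N$ limit, with the surviving diagrams weighted exactly by $\omega(d)$. Starting from \eqref{eq:trace}, applying Wick's Theorem to the Gaussian vector of entries expresses $\E\Tr W^k$ as a sum over all pair partitions $d\in\mathcal{D}_{2k}$ of $\prod_{p\overset{d}\sim q}\E(\text{$p$th factor})(\text{$q$th factor})$. The first step is to evaluate a single pair contraction: a factor at a black vertex has the form $X_{i_a i_b}$ (indices read in the ``natural'' order from the product), a factor at a white vertex has the form $X_{i_b i_a}$ (transposed order), and $\E X_{i_a i_b}X_{i_c i_d}$ is nonzero only when $\{i_a,i_b\}=\{i_c,i_d\}$ as an ordered-or-reversed pair; it equals $1$ when the index pairs match directly and $\rho$ when they match in reversed order. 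Translating this into the coloring convention: a chord between two vertices of opposite color forces the two relevant indices to be identified ``in order'' and contributes $1$, whereas a chord between two vertices of the same color identifies them ``in reversed order'' and contributes $\rho$. Hence each diagram $d$ contributes $\rho^{l(d)}$ times a power of $N$ coming from the number of free index choices, where $l(d)$ is the number of monochromatic chords.

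The second step is the standard genus-type estimate: for a fixed pair partition $d$, the constraints ``identify $i_a$ with $i_b$'' glue the $4k$ corners of the polygon (with the cyclic identification $i_0=i_{4k}$) into some number $V(d)$ of index classes, so the diagram contributes $N^{V(d)}$, and after dividing by $N^{2k}\cdot N$ (the $N^{-2k}$ from \eqref{eq:trace} and the $N^{-1}$ from the normalization of the moment) the exponent is $V(d)-2k-1$. The key combinatorial fact is that, regarding $d$ as a chord diagram on the $4k$ boundary points of a disk, one has $V(d)\le 2k+1$ with equality precisely when $d$ is planar (noncrossing); this is the usual Euler-characteristic / genus argument for one-face maps, where crossings strictly decrease the number of vertices of the dual map. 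Therefore non-planar diagrams contribute $O(N^{-1})$ and vanish in the limit, while each planar diagram contributes $\rho^{l(d)}=\omega(d)$, giving \eqref{eq:weights}.

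I would organize the argument as: (i) a lemma computing the single contraction $\E X_{i_ai_b}X_{i_ci_d}$ and matching the two cases to same-color versus opposite-color chords; (ii) the gluing description, identifying the number of surviving index labels with the number of faces (or vertices of the dual) of the map obtained by drawing $d$ in the disk with the polygon boundary; (iii) the inequality $V(d)\le 2k+1$ with equality iff planar, via Euler's formula for the sphere (one face exterior to the polygon, and the relation $V-E+F=2$ with $E=2k$ chords plus $4k$ boundary edges — careful bookkeeping needed here); (iv) conclude that only planar diagrams survive and each is weighted by $\omega(d)$.

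The main obstacle is step (iii): making the Euler-characteristic count fully rigorous requires carefully setting up the map associated to a chord diagram — deciding what the vertices, edges and faces are, handling the cyclic boundary identification $i_0=i_{4k}$ correctly, and proving the strict drop for crossing diagrams rather than just asserting it. One must also be slightly careful that the coloring pattern (white iff $j\bmod 4\in\{0,3\}$) is consistent with reading off the index orientation, so that ``monochromatic chord $\Leftrightarrow$ factor of $\rho$'' holds on the nose; a short case check over the four residues mod $4$ handles this. Everything else — Wick expansion, the single-contraction computation, and the final limit — is routine once the planarity/weight dictionary is pinned down.
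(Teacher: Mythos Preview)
Your proposal is correct and follows essentially the same approach as the paper: expand via Wick, split into planar and non-planar diagrams, show non-planar diagrams contribute $O(N^{-1})$ by counting free indices (boundary components), and identify the planar contribution with $\omega(d)$ via the same-color/opposite-color dichotomy. The paper's proof is in fact less detailed than your outline on the point you flag as the main obstacle --- it simply asserts that the number of boundary components equals $2k+1$ for planar diagrams and is strictly smaller otherwise, without writing out the Euler-characteristic bookkeeping --- so your step (iii) would supply an argument the paper takes for granted.
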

 \begin{proof}
 
 For the sake of simplicity we will use the denotation  $X_{\mathbf{j}}$
\begin{align*}
  X_{\mathbf{j}} = \begin{cases}
                   X_{i_{j-1} i_{j}} \textrm{, if } j \pmod{4} \in \{1,2\} \, , \\
                   X_{i_{j} i_{j-1}} \textrm{, if } j \pmod{4} \in \{0,3\} \, .
                  \end{cases}
\end{align*}

  \begin{align*}
  &\frac{1}{N}\Exp \Tr W^k  =  \frac{1}{N^{2k+1}}\sum_{(4k)} \Exp{\prod_{j=0}^{k-1} X_{i_{4j} i_{4j+1}} X_{i_{4j+1} i_{4j+2}} X_{i_{4j+3} i_{4j+2}} X_{i_{4j+4} i_{4j+3}}} \\
  & = \frac{1}{N^{2k+1}}\sum_{(4k)} \, \sum_{d \in \mathcal{D}_{2k}} \, \prod_{p \overset{d}\sim q} \Exp X_{\mathbf{p}}X_{\mathbf{q}} \\
  & = \sum_{d \in \mathcal{D}^0_{2k}} \frac{1}{N^{2k+1}}\sum_{(4k)} \, \prod_{p \overset{d}\sim q} \Exp X_{\mathbf{p}}X_{\mathbf{q}} \, +  \sum_{d \in \mathcal{D}_{2k}\setminus \mathcal{D}^0_{2k}} \frac{1}{N^{2k+1}}\sum_{(4k)} \, \prod_{p \overset{d}\sim q} \Exp X_{\mathbf{p}}X_{\mathbf{q}} \, .
  \end{align*}
The contribution of non-planar chord diagrams $d \in \mathcal{D}_{2k}\setminus \mathcal{D}^0_{2k}$ to the limit of $\frac{1}{N} \Exp Tr W^k$ is $0$. Indeed, all factors $\Exp X_{\mathbf{p}}X_{\mathbf{q}}$ are bounded; the number of summands in the sum  $\sum_{(4k)}$ is  equal to $N^f$, where $f$ is the number of independent indices $i_j$, or, in terms of chord diagrams, the number of boundary components. Since there is $N^{2k+1}$ in the denominator, a chord diagram makes an asymptotic nonzero contribution only if $f \geq 2k+1$. But
$f = 2k+1$ only for planar chord diagrams and $f$ is less for non-planar diagrams.

  For a planar chord diagram $d  \in \mathcal{D}^0_{2k}$, the number of independent indices $f = 2k+1$ and the product $\prod_{p \overset{d}\sim q} \Exp X_{\mathbf{p}}X_{\mathbf{q}} \neq 0$,  only if for all factors $\Exp X_{\mathbf{p}}X_{\mathbf{q}}$ indices $i_{p} = i_{q+1}$ and  $i_q = i_{p+1}$. So, if a chord connects the $p^{th}$ and $q^{th}$ vertices of the same color, say black, then  $\Exp X_{\mathbf{p}}X_{\mathbf{q}} =\Exp X_{i_p i_{p+1}}X_{i_q i_{q+1}} = \Exp X_{i_p i_{p+1}}X_{i_{p+1} i_{p}} = \rho$. If a chord connects a black vertex $p^{th}$ and a white vertex $q^{th}$, then  $\Exp X_{\mathbf{p}}X_{\mathbf{q}} =\Exp X_{i_p i_{p+1}}X_{i_{q+1} i_{q}} = \Exp X^2_{i_p i_{p+1}} = 1$. This implies that for any $d  \in \mathcal{D}^0_{2k}$
  $$\prod_{p \overset{d}\sim q} \Exp X_{\mathbf{p}}X_{\mathbf{q}} = \rho^{\# \textrm{chords connecting vertices of the same color}} = \omega(d) \, ,$$
  and so Lemma \ref{lem:comb} is proved.
\end{proof}

\begin{remark}
 While the proof of Lemma \ref{lem:comb} requires the assumption that the matrix entries are Gaussian random variables, the statement holds under much weaker assumptions. It seems to be possible to prove Lemma \ref{lem:comb} without the Gaussian assumption, using technique similar to \cite{goetze2014,alexeev2010a}. 
\end{remark}

 \begin{example}
 Let us consider the case $k=1$ and compute  $\frac{1}{N}\Exp \Tr W$. Chord diagrams, corresponding to all pair partitions in this case, are presented on Fig.\ref{fig:exk1}. 
 
 \begin{figure}[h]
 \centering
 \includegraphics[width=\textwidth]{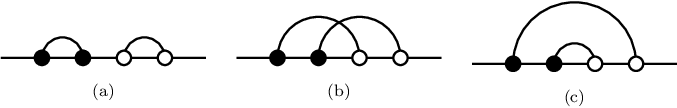}
  \caption{All chord diagrams which correspond to the expectation of the trace $\Exp X_{i_{0} i_{1}} X_{i_{1} i_{2}} X_{{i_3} i_{2}} X_{i_{0} i_{3}}$. The chord diagram (a) corresponds to the  pair partition $\Exp X_{i_{0} i_{1}} X_{i_{1} i_{2}} \Exp X_{{i_3} i_{2}} X_{i_{0} i_{3}}$, (b) corresponds to the pair partition  $\Exp X_{i_{0} i_{1}} X_{i_{3} i_{2}} \Exp X_{{i_1} i_{2}} X_{i_{0} i_{3}}$, and (c) corresponds to the pair partition $\Exp X_{i_{0} i_{1}} X_{i_{0} i_{3}} \Exp X_{{i_1} i_{2}} X_{i_{3} i_{2}}$.}\label{fig:exk1}
 \end{figure}
 
 Each product of the form $\Exp X_{i_j i_k} X_{i_l i_m}$ is not equal to $0$ only if $i_j = i_l, i_k = i_m$ or  $i_j = i_m, i_k = i_l$. For the case on Fig.\ref{fig:exk1}a it gives us
 \begin{align*}
  \Exp X_{i_{0} i_{1}} X_{i_{1} i_{2}} =\rho \ind{i_0=i_2} + \ind{i_0=i_1, i_1=i_2} \, , \\
  \Exp X_{{i_3} i_{2}} X_{i_{0} i_{3}} = \rho \ind{i_2=i_0} + \ind{i_3=i_0, i_2=i_3} \, , 
 \end{align*}
 and, finally, 
 \begin{align*}
  \Exp X_{i_{0} i_{1}} X_{i_{1} i_{2}} \Exp X_{{i_3} i_{2}} X_{i_{0} i_{3}} &=\\
 = \rho^2 \ind{i_2=i_0} + \rho\ind{i_0=i_2=i_3}&+\rho\ind{i_0=i_2=i_3}+\ind{i_0= i_1 = i_2=i_3} \, .
 \end{align*}
 
Analogously, the chord diagram on Fig.\ref{fig:exk1}b gives
 \begin{align*}
  \Exp X_{i_{0} i_{1}} X_{i_{3} i_{2}} \Exp X_{{i_1} i_{2}} X_{i_{0} i_{3}} &=\\
 = \rho^2 \ind{i_0=i_2, i_1 = i_3}& + (2\rho+1)\ind{i_0= i_1 = i_2=i_3} \, ,
 \end{align*}
 and the chord diagram on Fig.\ref{fig:exk1}c gives 
 \begin{align*}
  \Exp X_{i_{0} i_{1}} X_{i_{0} i_{3}} \Exp X_{{i_1} i_{2}} X_{i_{3} i_{2}} &=\\
 = \ind{i_1=i_3} + \rho\ind{i_0=i_1=i_3}&+\rho\ind{i_1=i_2=i_3}+\rho^2\ind{i_0= i_1 = i_2=i_3} \, .
 \end{align*}
 
 Using  Wick's formula, we obtain
 \small
 \begin{align*}
  &\frac{1}{N} \Exp Tr W = \frac{1}{N^3} \sum_{(4)} \Exp X_{i_{0} i_{1}} X_{i_{1} i_{2}} X_{{i_3} i_{2}} X_{i_{0} i_{3}} =  \\
   &\frac{1}{N^3} \sum_{(4)} (\Exp X_{i_{0} i_{1}} X_{i_{1} i_{2}} \Exp X_{{i_3} i_{2}} X_{i_{0} i_{3}} +\Exp X_{i_{0} i_{1}} X_{i_{3} i_{2}} \Exp X_{{i_1} i_{2}} X_{i_{0} i_{3}} +  \Exp X_{i_{0} i_{1}} X_{i_{0} i_{3}} \Exp X_{{i_1} i_{2}} X_{i_{3} i_{2}}) \, .   
 \end{align*}
 \normalsize
  Since the number of summands in the sum $\sum_{(4)}$ is equal to $N^f$, where $f$ is the number of independent indices $i_j$, or, in terms of chord diagrams, a number of boundary components,  we have
  
  $$\frac{1}{N} \Exp Tr W = \left(\rho^2+\frac{2\rho}{N}+\frac{1}{N^2} \right)+ \left(\frac{\rho^2}{N}+\frac{2\rho+1}{N^2} \right) +\left(1+\frac{2\rho}{N}+\frac{\rho^2}{N^2} \right) \; .$$
  
 In the large $N$ limit we get
 $$\lim_{N \to \infty}\frac{1}{N} \Exp Tr W = \rho^2+1 \; .$$
 

 \end{example}
 
 Let us now compute $\sum_{d \in \mathcal{D}^0_{2k}} \omega(d)$. To do this we introduce two sets $\mathcal{U}_k$ and $\mathcal{V}_k$ of planar chord diagrams.
 The set $\mathcal{U}_k$ contains all planar chord diagrams on $2k$ vertices, which are colored black and white according to the rule: 
 the $j^{th}$ vertex is black, if $j \pmod{4} \in \{1,2\}$ and white otherwise.  The set $\mathcal{V}_k$ contains all planar chord diagrams on $2k$ vertices, which are colored black and white according to the rule:  the $j^{th}$ vertex is black if $j \pmod{4} \in \{0,1\}$, and white otherwise. We use $\overline{\mathcal{U}}_k$ and  $\overline{\mathcal{V}}_k$ for the set of diagrams from   $\mathcal{U}_k$ and  $\mathcal{V}_k$ (respectively) with inverted colors.
 Note that the set $\mathcal{U}_{2k} = \mathcal{D}^0_{2k}$ for all $k$. We define partition functions $U_k(\rho)$ and $V_k(\rho)$:
 $$U_k(\rho) = \sum_{d \in \mathcal{U}_{k}} \omega(d) \, ,$$
  $$V_k(\rho) = \sum_{d \in \mathcal{V}_{k}} \omega(d) \, .$$
  We assume that $\mathcal{U}_0 = \mathcal{V}_0$ contains one empty diagram, so   $U_0(\rho) = V_0(\rho) =1$.
  
  \begin{lemma}
  The partition functions $U_k(\rho)$ and $V_k(\rho)$ satisfy the following recurrent relations:
  $$ U_{k+1}(\rho) = \sum_{i =0}^{\left\lfloor\frac{k-1}{2}\right\rfloor} U_{k-2i-1}(\rho) V_{2i+1}(\rho) + \rho \sum_{i=0}^{\left\lfloor\frac{k}{2}\right\rfloor} V_{2i}(\rho) U_{k-2i}(\rho) \, ,$$
  $$ V_{k+1}(\rho) = \sum_{i=0}^{\left\lfloor\frac{k}{2}\right\rfloor} U_{2i}(\rho) V_{k-2i}(\rho)+\rho \sum_{i=0}^{\left\lfloor\frac{k-1}{2}\right\rfloor} U_{2i+1}(\rho)V_{k-2i-1}(\rho) \, ,$$
  with initial conditions $U_0(\rho) = V_0(\rho) =1$.
  \label{lem:recrel}
  \end{lemma}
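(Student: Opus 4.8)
The plan is to prove both recurrences by the classical first-chord decomposition of planar chord diagrams, carefully tracking how the two positional colourings restrict to the pieces. The one observation that is not pure bookkeeping is that the weight $\omega$ is invariant under reversing all colours of a diagram: $\omega(d)$ depends only on which chords join two equally coloured endpoints, and that is unchanged by the global colour swap. Hence $\sum_{d\in\overline{\mathcal{U}}_k}\omega(d)=U_k(\rho)$ and $\sum_{d\in\overline{\mathcal{V}}_k}\omega(d)=V_k(\rho)$, which lets all four families $\mathcal{U},\mathcal{V},\overline{\mathcal{U}},\overline{\mathcal{V}}$ be expressed through $U$ and $V$ alone.

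I would start with $\mathcal{U}_{k+1}$, on vertices $1,\dots,2k+2$ coloured by position. Given $d\in\mathcal{U}_{k+1}$, let $m$ be the vertex chorded to vertex $1$. Planarity forces the chord $\{1,m\}$ to split the remaining vertices into an inner block $\{2,\dots,m-1\}$ and an outer block $\{m+1,\dots,2k+2\}$ with no chord of $d$ crossing between them, so $d$ restricts to a planar chord diagram on each block; in particular both blocks have even size, whence $m$ is even, and the number of monochromatic chords of $d$ equals the sum of those of the two restrictions plus $1$ if $\{1,m\}$ itself is monochromatic. Thus $\omega$ factorises over the three pieces, and conversely gluing any planar diagram on the inner block and any planar diagram on the outer block back along $\{1,m\}$ recovers an element of $\mathcal{U}_{k+1}$, so the decomposition is a bijection. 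It then remains to determine, for each residue of $m$ modulo $4$, whether $\{1,m\}$ is monochromatic and which family (up to colour reversal) each re-indexed sub-block belongs to.

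This last step is a short congruence computation, which I would lay out as a small table of sub-cases. In $\mathcal{U}_{k+1}$ vertex $1$ is black, and an even vertex $m$ is black precisely when $m\equiv 2\pmod 4$. Writing $m=4i+2$: the chord $\{1,m\}$ is monochromatic, the re-indexed inner block belongs to $\mathcal{V}_{2i}$ and the re-indexed outer block to $\overline{\mathcal{U}}_{k-2i}$, so after summing $\omega$ and using colour-reversal invariance these contribute $\rho\,V_{2i}(\rho)U_{k-2i}(\rho)$ with $0\le i\le\lfloor k/2\rfloor$. Writing instead $m=4i+4$ (the case $m\equiv 0\pmod 4$): the chord is bichromatic, the re-indexed inner block belongs to $\mathcal{V}_{2i+1}$ and the outer block to $\mathcal{U}_{k-2i-1}$, contributing $U_{k-2i-1}(\rho)V_{2i+1}(\rho)$ with $0\le i\le\lfloor(k-1)/2\rfloor$; adding the two cases gives the asserted recurrence for $U_{k+1}$. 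For $V_{k+1}$ the computation is the same with the colour patterns interchanged: vertex $1$ is black and an even $m$ is black exactly when $m\equiv 0\pmod 4$, so the monochromatic case $m=4i+4$ (inner block in $\overline{\mathcal{U}}_{2i+1}$, outer in $\mathcal{V}_{k-2i-1}$) contributes $\rho\,U_{2i+1}(\rho)V_{k-2i-1}(\rho)$, and the bichromatic case $m=4i+2$ (inner block in $\overline{\mathcal{U}}_{2i}$, outer in $\overline{\mathcal{V}}_{k-2i}$) contributes $U_{2i}(\rho)V_{k-2i}(\rho)$, with index ranges again matching the floor functions in the statement. Finally I would check the degenerate blocks ($m=2$ or $m=2k+2$, an empty diagram) against the convention $U_0=V_0=1$, and the base case $k=0$ ($U_1=\rho$, $V_1=1$) directly.

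I do not anticipate a conceptual obstacle: once colour-reversal invariance is noted, everything is forced. The only place an error is likely is the modular arithmetic that identifies the colour pattern inherited by each re-indexed sub-block — it depends on the shift $m$ — together with lining up the resulting ranges of $i$ with $\lfloor k/2\rfloor$ and $\lfloor(k-1)/2\rfloor$; this is why I would write the four sub-cases out explicitly rather than in prose.
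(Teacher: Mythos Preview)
Your proposal is correct and follows essentially the same first-chord decomposition as the paper's proof: split $d\in\mathcal{U}_{k+1}$ (resp.\ $\mathcal{V}_{k+1}$) according to the endpoint $m$ of the chord at vertex $1$, distinguish the two residues of $m$ modulo $4$, and identify the inner and outer sub-diagrams with the appropriate families $\mathcal{V},\mathcal{U},\overline{\mathcal{U}},\overline{\mathcal{V}}$. You are a bit more explicit than the paper in two places --- you state the colour-reversal invariance $\sum_{d\in\overline{\mathcal{U}}_k}\omega(d)=U_k(\rho)$ (which the paper uses silently in its last line) and you actually carry out the $V_{k+1}$ case rather than leaving it as ``analogous'' --- but the argument is the same.
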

  
  \begin{proof}
   Consider a chord diagram $d \in \mathcal{U}_{k+1}$. Its first vertex is connected with some vertex with an even number, since the diagram is planar. If its first vertex is connected with the $(4i+4)^{th}$ vertex, then, after removing the first chord, the diagram $d$ splits into two diagrams:  $d_1 \in \mathcal{V}_{2i+1}$ and  $d_2 \in \mathcal{U}_{k-2i-1}$, respectively (see Fig.\ref{fig:usplit}a). The weight of such a diagram is $\omega(d) = \omega(d_1)\omega(d_2)$. For any pair of diagrams $d_1 \in \mathcal{V}_{2i+1}$ and  $d_2 \in \mathcal{U}_{k-2i-1}$ there exists a unique diagram $d$ which splits into $d_1$ and $d_2$ after the removing of its first chord. 
   
   If the  first vertex of $d$ is connected with the $(4i+2)^{nd}$ vertex, then, after removing the first chord, the diagram $d$ splits into two diagrams:  $d_1 \in \mathcal{V}_{2i}$ and  $d_2 \in \overline{\mathcal{U}}_{k-2i}$, respectively (see Fig.\ref{fig:usplit}b). The weight of such a diagram is  $\omega(d) = \rho\omega(d_1)\omega(d_2)$. Again, diagrams $d_1$ and $d_2$ determine the diagram $d$ in a unique way.
   
   It gives us
   \begin{align*}
   &U_{k+1}(\rho) = \sum_{d \in \mathcal{U}_{k+1}} \omega(d) = \\
   &\sum_{i =0}^{\left\lfloor\frac{k-1}{2}\right\rfloor} \sum_{d_1 \in \mathcal{V}_{2i+1}} \sum_{d_2 \in \mathcal{U}_{k-2i-1}} \omega(d_1)\omega(d_2) + \sum_{i=0}^{\left\lfloor\frac{k}{2}\right\rfloor} \sum_{d_1 \in \mathcal{V}_{2i}} \sum_{d_2 \in \overline{\mathcal{U}}_{k-2i}}\rho\omega(d_1)\omega(d_2) = \\
   &\sum_{i =0}^{\left\lfloor\frac{k-1}{2}\right\rfloor} \sum_{d_1 \in \mathcal{V}_{2i+1}}\omega(d_1)\, \sum_{d_2 \in \mathcal{U}_{k-2i-1}}\omega(d_2) + \rho\sum_{i=0}^{\left\lfloor\frac{k}{2}\right\rfloor} \sum_{d_1 \in \mathcal{V}_{2i}}\omega(d_1)\, \sum_{d_2 \in \overline{\mathcal{U}}_{k-2i}}\omega(d_2) =\\
   &\sum_{i =0}^{\left\lfloor\frac{k-1}{2}\right\rfloor} U_{k-2i-1}(\rho) V_{2i+1}(\rho) + \rho \sum_{i=0}^{\left\lfloor\frac{k}{2}\right\rfloor} V_{2i}(\rho) U_{k-2i}(\rho) \, .
   \end{align*}
   
   \begin{figure}[h]
 \centering
 \includegraphics[width=\textwidth]{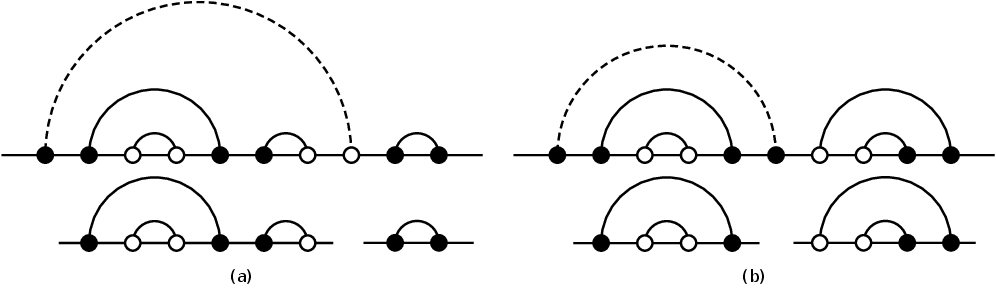}
  \caption{Two cases of splitting of a diagram $d \in \mathcal{U}_{5}$: (a) $d$ splits into $d_1 \in \mathcal{V}_{3}$  and $d_2 \in \mathcal{U}_{1}$; (b) $d$ splits into $d_1 \in \mathcal{V}_{2}$  and $d_2 \in \overline{\mathcal{U}}_{2}$.}\label{fig:usplit}
 \end{figure}
 Analogously one can compute $V_{k+1}(\rho)$.
   
  \end{proof}
  
  We have shown that for any natural $k$ the moment $M^{(N)}_k(\rho)$ of the distribution $F_N(x)$ has a limit $M_k(\rho)$. We have proved that $M_k(\rho) = U_{2k}(\rho)$, where polynomials $U_k(\rho)$ satisfy the recurrent relation \eqref{eq:rec}. Note that $U_{k}(\rho) \leq U_{k}(1)$ for any $\rho \in [-1,1]$ and $U_{k}(1)$ is equal to the $k^{th}$ Catalan number, because the equations \eqref{eq:rec}  with $\rho = 1$ turn into the classic relation for Catalan numbers. These reasons lead us to the inequalities $\sqrt[k]{M_k(\rho)} = \sqrt[k]{U_{2k}(\rho)} \leq \sqrt[k]{U_{2k}(1)} \leq 16$, so the moment problem is determined and the limiting distribution $F(x)$ has a finite support. Theorem \ref{th:mom} is proved.
  
  \begin{remark}
   We note that Theorem \ref{th:mom} can be generalized for higher powers of matrix $X$. For example, we propose that for  $W_3 =\frac{1}{N^3} X^3 X^{*3}$ a system of equations similar to \eqref{eq:rec} can be obtained, but such a system would contain $3$ equations and $3$ unknown functions. We also note that the case $W_1 = \frac{1}{N} X X^{*}$ is well studied and the spectral distribution in this case is known to be Marchenko--Pastur distribution \cite{naumov2012}.
  \end{remark}

 \section{Narayana Polynomials}
Narayana numbers, the coefficients of Narayana polynomials (of any type), are natural numbers, which occur in various counting problems, and their most general definition involves $h$-vectors of associahedra of a corresponding type (see  \cite{fomin2005}). Here we will define Narayana polynomials of types A and B using the notion of non-crossing partitions.

Let us define usual (type A) non-crossing partitions first.
\begin{definition}
 Let $S$  be a finite totally ordered set.
 We call $\pi = \{B_1, B_2, \dots, B_r \}$ a partition of the set $S$ if the $B_i \, (1 \leq i \leq r)$ are pairwise disjoint, non-empty subsets of $S$, such that
$B_1 \cup B_2 \cup \dots \cup B_r = S$. We call $B_i$ blocks of $\pi$. We use $p \underset{\pi}\sim q$ to denote that $p$ and $q$ belong to the same block of $\pi$.

A partition $\pi$ of the set $S$ is called crossing if there exist $p_1 < q_1 < p_2 < q_2$ in $S$ such that $p_1, p_2 \in B_i$, $q_1, q_2 \in B_j$ and $B_i\neq B_j$.

If $\pi$ is not crossing, then it is called {\it non-crossing}. The set of all non-crossing partitions of $S$ is denoted by $NC(S)$. If $S =  \{1,2,\dots,n\}$ (or, for brevity,  if $S = [n]$), then we use $NC(n)$ or $NC^A(n)$ for $NC(\{1,2,\dots,n\})$.
\end{definition}
We define type B non-crossing partitions in the same way as in \cite{reiner1997}.
\begin{definition}
 We say that a non-crossing partition $\pi$ of a set  
$$[\pm n] = \{-1, -2, \dots, -n, 1, 2, \dots, n \} \, ,$$
equipped with a total order:
$$-1 < -2 < \dots< -n< 1 < 2 < \dots < n \, ,$$
is a type B non-crossing partition (denoted by $\pi \in NC^B(n)$), if for any block $B \in \pi$, its negative $-B$ (obtained by negating all the elements of $B$) is also a block of $\pi$. Note that there is at most one block (called the zero block, if present) containing both $+i$ and $-i$ for some $i$.
\end{definition}
All $B$-type non-crossing partitions of $[\pm 2]$ are presented in Figure \ref{fig:btype}.
  \begin{figure}[h]
   \centering
   \includegraphics[width=\textwidth]{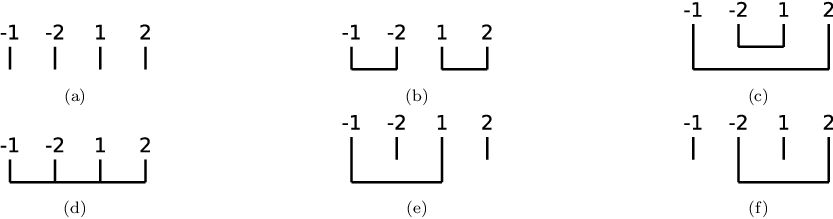}
   \caption{All $NC^B(2)$ partitions. Partitions (d), (e) and (f) have a zero block, and partitions (a), (b) and (c) do not.}
  \label{fig:btype}
  \end{figure}
\begin{definition}
 The combinatorial interpretation of Narayana polynomials is as follows:
\begin{align*}
 N^{A}_n(t) &= \sum_{\pi \in NC^A(n)} t^{\#\{\textrm{blocks in }\pi\}} \, , \\
  N^{B}_n(t) &= \sum_{\pi \in NC^B(n)} t^{\#\{\textrm{nonzero blocks in }\pi\}/2} \, .
\end{align*}
\end{definition}
Note that nonzero blocks appear in type B non-crossing partitions in pairs ($B$ and $-B$), so the exponent $\#\{\textrm{nonzero blocks in }\pi\}/2$ is always an integer number. By definition, for $n=0$ 
$$N^{A}_0(t)=N^{B}_0(t)=1 \, ,$$
and for $n>0$ the explicit formulas for Narayana polynomials \cite{reiner1997} are given by
\begin{align*}
 N^{A}_n(t) &= \sum_{k=1}^n \frac{1}{k}\binom{n-1}{k-1} \binom{n}{k-1} t^k \, , \\
  N^{B}_n(t)  &= \sum_{k=0}^n \binom{n}{k}^2 t^k \, .
\end{align*}

We will also use polynomials $Q_n(t)$ -- a derivative of Narayana polynomial of type A:
 $$Q_{n-1}(t) =  \sum_{k=1}^n \binom{n-1}{k-1} \binom{n}{k-1} t^{k-1} =  \left(N^{A}_n(t)\right)' \, .$$
 
We note here, that coefficients of $N^A_n(t), N^B_n(t)$ and $Q_n(t)$ are tabled in OEIS \cite{oeis} (sequences A001263, A008459, A132813, respectively).

The coefficients of $Q_{n-1}(t)$ have a natural interpretation. Let us consider $\pi \in NC^A(n)$ and mark one of its blocks. Then
\begin{equation}
 Q_{n-1}(t) = \sum_{\pi \in NC^A(n) \textrm{with one marked block}} t^{\#\{\textrm{unmarked blocks in }\pi\}} \, .
\end{equation}
Let us denote by $NC'(n)$ the set of all type $A$ non-crossing partitions of $[n]$  with a marked block.

Now we formulate several relations between $N^A_n(t), N^B_n(t)$ and $Q_n(t)$. Some of them are known, and some of them are probably new, so we provide proofs for them.
\begin{theorem} For all integer $n > 0$, the following relations hold:
 \begin{align}
  &N^B_n(t) = Q_{n-1}(t) + t^n Q_{n-1}\left(\frac{1}{t}\right) \, , \label{eq:NQQ} \\
  &(n+1)N^A_n(t) = t Q_{n-1}(t) + t^n Q_{n-1}\left(\frac{1}{t}\right) \, , \label{eq:NAQQ} \\
  &Q_{n-1}(t) = \sum_{k=1}^{n} N_{k-1}^{A}(t) N^{B}_{n-k}(t) \, , \label{eq:QNN}\\
  &Q_{n}(t) =  (n+1)N_{n}^A(t) + \sum_{k=1}^{n} N_{k-1}^{A}(t) Q_{n-k}(t)\, , \label{eq:QNNQ}\\
  &N^A_n(t) = t N^A_{n-1}(t) + \sum_{k=1}^{n-1}N^A_{k-1}(t) N^A_{n-k}(t) \, ,\label{eq:NAr} \\
  &N^B_n(t) =  t N^B_{n-1}(t)+\sum_{k=1}^n N^A_{k-1}(t) N^B_{n-k}(t) + \sum_{k=1}^{n-1} N^B_{k-1}(t) N^A_{n-k}(t) \, .  \label{eq:NBr}
 \end{align}
\label{th:comb}
\end{theorem}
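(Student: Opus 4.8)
The plan is to prove the six identities of Theorem~\ref{th:comb} in an order that lets later ones reuse earlier ones, using throughout the two basic combinatorial models already set up: $N^A_n(t)$ counts $\pi\in NC^A(n)$ weighted by $t^{\#\textrm{blocks}}$, $N^B_n(t)$ counts $\pi\in NC^B(n)$ weighted by $t^{\#\textrm{non-zero blocks}/2}$, and $Q_{n-1}(t)$ counts pairs $(\pi,B_0)$ with $\pi\in NC^A(n)$ and $B_0$ a marked block, weighted by $t^{\#\textrm{unmarked blocks}}$ (the set $NC'(n)$). The last two identities \eqref{eq:NAr} and \eqref{eq:NBr} are the "first-return" decompositions and I would prove them first, since they are the cleanest and are purely structural; \eqref{eq:NQQ}, \eqref{eq:NAQQ}, \eqref{eq:QNN}, \eqref{eq:QNNQ} then follow by similar but slightly more delicate surgery on the marked/zero block.

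First I would establish \eqref{eq:NAr}: in $\pi\in NC^A(n)$, look at the block $B$ containing $1$; if its largest element is $j$, non-crossingness forces $\pi$ to restrict to a non-crossing partition of $\{2,\dots,j-1\}$ and of $\{j,\dots,n\}$ with $j$ lying in the block $B$, but this is not quite the standard recursion — the cleaner route is: either $\{1\}$ is a singleton (contributing $tN^A_{n-1}(t)$, re-indexing $\{2,\dots,n\}$), or $1$ sits with some later element; letting $j\ge 2$ be the next element of $[n]$ in the same block as $1$, the interval $\{2,\dots,j-1\}$ splits off as an independent $NC^A(j-2)$ factor and $\{j,\dots,n\}$ (with $j$ playing the role of $1$) as an independent $NC^A(n-j+1)$ factor, and summing over $j$ gives $\sum_{k=1}^{n-1} N^A_{k-1}(t)N^A_{n-k}(t)$. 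For \eqref{eq:NBr} I would run the same argument "around the circle" $[\pm n]$: the block containing $1$ either is the zero block, or is a non-zero block $B$ together with $-B$; analyzing where $B$ closes up relative to the cyclic order $-1<\dots<-n<1<\dots<n$ produces the three terms — a $tN^B_{n-1}(t)$ term (analogue of the singleton), an $N^A\cdot N^B$ term (an arc cutting off a type-A interval on one side and leaving a type-B remainder), and an $N^B\cdot N^A$ term (the mirror situation forced by the $\pm$ symmetry). This case analysis on where a block of a type-B partition "wraps" is the step I expect to be the main obstacle, because one must check carefully that every $NC^B$ partition is counted exactly once and that the zero-block case and the two asymmetric arc cases are genuinely disjoint and exhaustive.

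Next I would prove \eqref{eq:QNQ}-type identities by working with $NC'(n)$. For \eqref{eq:NQQ}: given $(\pi,B_0)\in NC'(n)$, split $[n]$ at the marked block — the elements "inside" $B_0$ versus "outside" $B_0$ each carry a non-crossing structure — and bijectively fold this into a type-B partition on $[\pm n]$ by letting $B_0$ become (part of) the zero block; a diagram $d\in NC^B(n)$ with zero block $Z$ "unfolds" to the pair where $B_0$ is $Z\cap[n]$, and a diagram with no zero block corresponds to the two halves being separated, which accounts for the $t^nQ_{n-1}(1/t)$ term via the reversal $k\mapsto n-k$ of the block count (the exponent flips because marked/unmarked blocks swap roles under reading the complement). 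Identity \eqref{eq:NAQQ} is then obtained from \eqref{eq:NQQ} by the algebraic relation $N^B_n(t) - (n+1)N^A_n(t) = (1-t)Q_{n-1}(t)$, which I would verify directly on the closed forms $\binom{n}{k}^2$ and $\frac{1}{k}\binom{n-1}{k-1}\binom{n}{k-1}$ using the Vandermonde-type identity $\binom{n}{k}^2-\binom{n}{k}\binom{n}{k-1} = \binom{n}{k}\binom{n-1}{k-1}\cdot\frac{n-2k+1}{?}$ — more honestly, just compare coefficients of $t^k$ on both sides, which reduces to an elementary binomial identity.

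Finally, \eqref{eq:QNN} and \eqref{eq:QNNQ} are convolution identities for $Q$ that I would again read off a first-return decomposition of $NC'(n)$: in a marked pair $(\pi,B_0)$, condition on the block containing $1$ and on whether it is the marked block. If the block of $1$ is marked, cutting at it leaves a genuine $NC^B$-style symmetric remainder (this yields the $N^{B}_{n-k}$ factors in \eqref{eq:QNN}, paired with an $N^A_{k-1}$ factor from the interval the marked block cuts off); if it is not marked, one gets the pure $Q$-convolution term of \eqref{eq:QNNQ} plus a leftover $(n+1)N^A_n$ term corresponding to the degenerate case where the marked block is forced. Throughout, the only real subtlety is keeping the $t$-exponents straight under the various interval-splittings — each split must partition the unmarked blocks additively — and checking the degenerate boundary terms ($k=1$, $k=n$, singleton/zero-block cases) separately, which I would verify by evaluating both sides at small $n$. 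The consistent theme, and the one technical point to get right, is the bijection between type-B partitions and marked type-A partitions that underlies \eqref{eq:NQQ}; once that is in hand, all the remaining identities are variations on the same surgery.
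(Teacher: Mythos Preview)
Your plan has the right overall architecture, and the first-return arguments you sketch for \eqref{eq:NAr} and \eqref{eq:NBr} are essentially what Reiner does (the paper simply cites him). But there is a genuine gap at exactly the point you flag as the crux: the bijection underlying \eqref{eq:NQQ}.

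You correctly propose to identify $NC'(n)$ with the type-$B$ partitions \emph{having} a zero block, by letting the marked block become the zero block; this is precisely the paper's map (they phrase it as applying the ``absolute value'' map $\abs: NC^B(n)\to NC^A(n)$ and marking the image of the zero block). What you do not supply is a mechanism for the \emph{other} half of $NC^B(n)$: partitions with no zero block. Your phrase ``a diagram with no zero block corresponds to the two halves being separated\dots via the reversal $k\mapsto n-k$'' is not a construction; there is no obvious reason a type-$B$ partition without zero block should canonically pair with a marked type-$A$ partition whose block count is complementary. The paper's missing ingredient here is the \emph{Kreweras complementation map} $K$: Reiner shows $K$ restricts to a bijection between $NC^B_0(n)$ (with zero block) and $NC^B(n)\setminus NC^B_0(n)$ (without), sending a partition with $2k$ non-zero blocks to one with $2(n-k)$ non-zero blocks. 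That is what produces the $t^nQ_{n-1}(1/t)$ term, and without it your argument for \eqref{eq:NQQ} does not go through.

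This gap propagates: the paper's proofs of \eqref{eq:QNN} and \eqref{eq:QNNQ} \emph{do not} work directly in $NC'(n)$ as you propose, but rather use the identification $NC'(n)\cong NC_0^B(n)$ established above and then decompose on the position of the first element of the zero block (for \eqref{eq:QNN}) or its first two elements (for \eqref{eq:QNNQ}). Your sentence ``if the block of $1$ is marked, cutting at it leaves a genuine $NC^B$-style symmetric remainder'' is not right as stated: cutting a type-$A$ partition at a block leaves type-$A$ pieces, not a type-$B$ piece. The type-$B$ factor in \eqref{eq:QNN} only appears naturally once you are working on the doubled ground set $[\pm n]$. Similarly, for \eqref{eq:NAQQ} the paper does not compare binomial coefficients but uses the structural fact (due to Biane) that $\abs$ is an $(n{+}1)$-to-$1$ map, which immediately gives the identity once block counts under $\abs$ are understood; your binomial-identity route would work, but it bypasses the combinatorics that the rest of the argument relies on.
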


\begin{proof}
Let us define the map 
$$\abs: NC^B(n) \to NC^A(n)$$ 
in the following way. For any non-crossing partition  $\pi \in NC^B(n)$ we define a non-crossing partition $\sigma = \abs(\pi)  \in  NC^A(n)$, in which $i\underset{\sigma}\sim j$ if and only if $i \underset{\pi}\sim j$ or $i \underset{\pi}\sim -j$. The map $\abs()$ is an $(n+1)-to-1$ map \cite{biane2003} and it respects the block statistics, that is: if $\pi \in NC^B(n)$ contains $2k$ nonzero blocks and $z$ zero blocks ($z$ can be equal to  $1$ or $0$), then $\sigma = \abs(\pi)$ contains $k+z$ blocks.

Let us also define the Kreweras complementation map $K$ in the following way. Let $\pi$ be a non-crossing partition of an arbitrary  finite totally ordered set. We define a non-crossing partition $\lambda = K(\pi)$, in which $i\underset{\lambda}\sim j$ if and only if there are no pairs $k\underset{\pi}\sim  l$, such that $k \leq i < l \leq j$ or $i < k \leq j < l$. In particular, if $\pi \in NC(n)$, consider a totally ordered set $S(n,n')$ 
 $$1<1'<2<2'< \dots< n< n'$$ and $\pi$ - a non-crossing partition on its subset $[n]$. Then $\lambda = K(\pi)$ is the largest (with respect to the refinement order) non-crossing partition on $\{1',2',\dots,n'\}$, such that $\pi \cup \lambda$ is a non-crossing partition on $S(n,n')$  (See Figure \ref{fig:kreweras} for an example).
  \begin{figure}[ht]
   \centering
   \includegraphics[width=\textwidth]{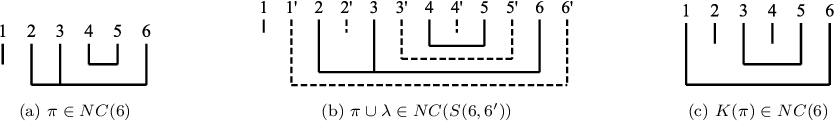}
   \caption{An example of the Kreweras complementation map.}
   \label{fig:kreweras}
  \end{figure}

 Denote by $NC_0^B(n)$ the set of partitions from $NC^B(n)$, containing a zero block, and by $Z(\pi)$ the zero block of $\pi \in NC_0^B(n)$. For any $\pi \in NC_0^B(n)$ we consider $\abs(\pi)$ and mark an image of its zero block.  This procedure is a bijection between $NC_0^B(n)$ and $NC'(n)$. The bijection is statistics-preserving, i.e., half the number of nonzero blocks in $\pi \in NC_0^B(n)$ is equal to the number of unmarked blocks in its image $\sigma' \in NC'(n)$. It implies:
 \begin{equation}
   Q_{n-1}(t) =  \sum_{\substack{\pi \in NC^B(n):\\ \pi\textrm{ has a zero block}}} t^{\#\{\textrm{nonzero blocks in }\pi\}/2} \, .
   \label{eq:Qnz}
 \end{equation}

 An example with $n=3$ is given in Figure \ref{fig:bt_qt}.
   \begin{figure}[ht]
   \centering
   \includegraphics[width=\textwidth]{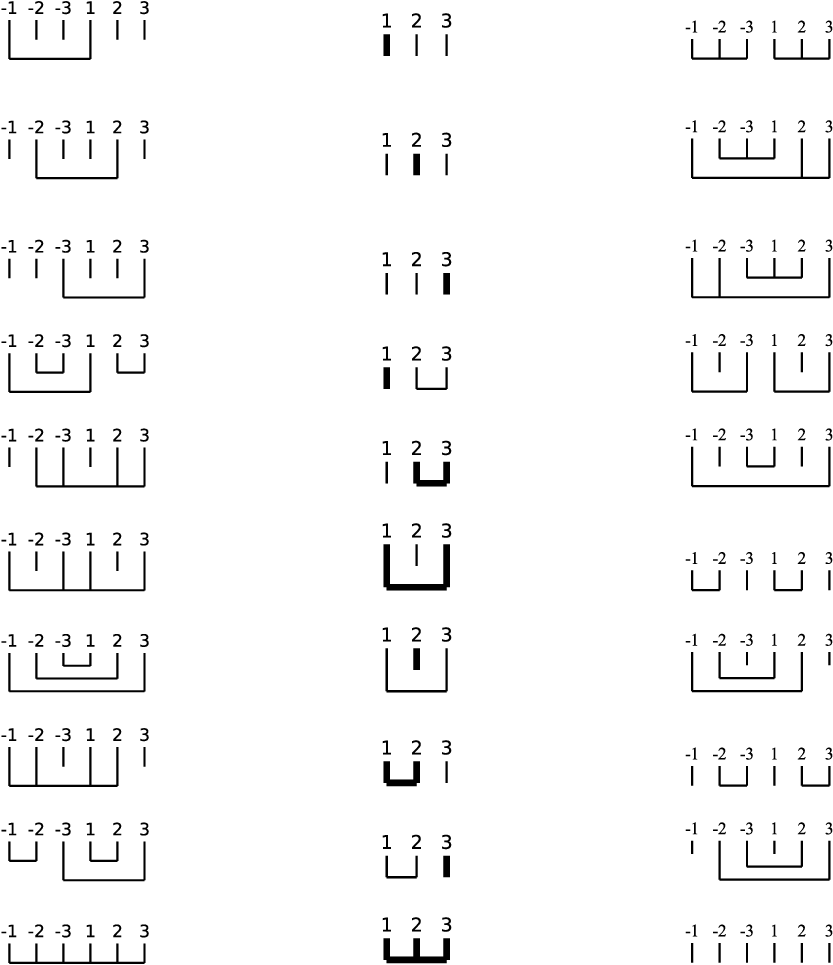}
   \caption{Left column: partitions $NC_0^B(3)$; Center column: corresponding elements of $NC'(3)$; Right column: image of left column under map $K$.}
   \label{fig:bt_qt}
  \end{figure}
  
 Furthermore, the Kreweras map is a bijection $K$ between B-type non-crossing partitions, containing a zero block, and B-type non-crossing partitions, containing no zero blocks \cite{reiner1997}. The map $K$ has the following property: if $\pi \in NC_0^B(n)$ contains $2k+1$ blocks ($2k$ nonzero blocks and $1$ zero block), then $K(\pi)$ contains $2n-2k$ nonzero blocks. This implies
 \begin{equation}
    t^n Q_{n-1}\left(\frac{1}{t}\right) =  \sum_{\substack{\pi \in NC^B(n):\\ \pi\textrm{ has no zero block}}} t^{\#\{\textrm{nonzero blocks in }\pi\}/2} \, ,
    \label{eq:Qwoz}
 \end{equation}

 and proves \eqref{eq:NQQ}.
 
 To prove \eqref{eq:NAQQ} we consider the image of $NC^B(n)$ under the map $\abs()$. Note that if $\pi \in NC_0^B(n)$ contains $2k+1$ blocks ($2k$ nonzero blocks and $1$ zero block), then $\abs(\pi)$ contains $k+1$ blocks, and if $\pi \in NC^B(n)$ contains no zero block and $2k$ nonzero blocks, then $abs(\pi)$ contains $k$ blocks. Since $abs()$ is a $(n+1)-to-1$ map, we get
 $$(n+1)N^A_n(t) = t Q_{n-1}(t) + t^n Q_{n-1}\left(\frac{1}{t}\right) \, .$$

  To prove \eqref{eq:QNN} we will use an interpretation of $Q_n(t)$ in terms of $B$-type non-crossing partitions with a zero-block \eqref{eq:Qnz}. For each $\pi \in NC_0^B(n)$ we define $-l$ -- the smallest element of the zero block, and 
  $$m = \max \{i>0: -i  \not\underset{\pi}\sim i \textrm{ and } -i  \underset{\pi}\sim j \textrm{ for some } j > i\}$$ (if the set is empty, we assume $m = 0$).  We note that the set $[m+1,l-1]$ is closed with respect to $\pi$, i.e. any block of $\pi$ either is a subset of $[m+1,l-1]$ or does not intersect it. We decompose $\pi$ into a pair $\pi_A \in NC^A(k-1)$ and $\pi_B \in NC^B(n-k)$, where $k = l-m$. The structure of $\pi_A$ is inherited from $\pi$ on the set $[m+1,l-1]$, i.e. for any $1\leq i,j \leq l-m-1$ we have $i \underset{\pi_A}\sim j$ if and only if $i+m \underset{\pi}\sim j+m$. The structure of $\pi_B$ is inherited from $\pi$ on the set $[\pm n] \setminus [-(m+1),-l]\setminus [m+1,l]$.
  The described decomposition of $\pi$ into $\pi_A$ and $\pi_B$ is the bijection between $NC_0^B(n)$ and $\cup_{k=1}^n NC^A(k-1)\times NC^B(n-k)$. The inverse map is the following. To reconstruct $\pi \in  NC_0^B(n)$ from given $\pi_A \in NC^A(k-1)$ and $\pi_B \in NC^B(n-k)$ one needs to add an element $l$ (and its negative $-l$) to the zero block of $\pi_B$ (or create the zero block, if needed) in the leftmost possible position and then add $\pi_A$ and its negative copy in front of $l$ and $-l$, respectively. 
  
   This proves \eqref{eq:QNN}. An example of the decomposition is given in Figure \ref{fig:QNN}.


  \begin{figure}[ht]
   \centering
   \includegraphics[width=\textwidth]{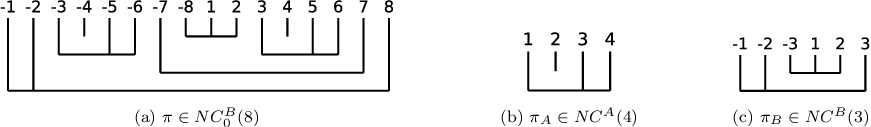}
  \caption{Decomposition of $\pi$ into $\pi_A$ and $\pi_B$.}
  \label{fig:QNN}
  \end{figure}

 
 The proof of \eqref{eq:QNNQ} is similar. First, consider the case when the zero block of $\pi \in NC_0^B(n+1)$ contains only 2 elements. We note that there is a natural statistics-preserving $(n+1)-to-1$ map between $\{\pi \in NC_0^B(n+1): |Z(\pi)| =2\}$ and $NC^A(n)$ -- one can delete the zero block and then take $\abs()$. Indeed, the preimage $\pi \in NC_0^B(n+1)$ of $\pi_A \in NC^A(n)$ has a two-element zero block $(-l,l)$ with some $l \in \{1,2,\dots,n+1\}$, and the rest of the structure is uniquely defined by the structure of $\pi_A$ and the position of the zero block. So,
 $$\sum_{\substack{\pi \in NC_0^B(n+1):\\ |Z(\pi)| =2}} t^{\#\{\textrm{nonzero blocks in }\pi\}/2} = (n+1)N_{n}^A(t) \, .$$
 
 Then, if the zero block of $\pi \in NC_0^B(n+1)$ contains more than $2$ elements, one can decompose such a partition into $\pi_A \in NC^A(k-1)$ and $\pi_B \in NC_0^B(n-k)$. Namely, denote by $-l_1$ and $-l_2$ the first and the second elements of the zero block, respectively, and assume $k = l_2-l_1$. The structure of $\pi_A$ is inherited from $\pi$ on the set $[l_1+1,l_2-1]$ (this set can be empty). The structure of $\pi_B$ is inherited from $\pi$ on the set $[\pm n] \setminus [-l_1,-(l_2-1)]\setminus [l_1,l_2-1]$. It concludes the proof.
 
 For the proof of \eqref{eq:NAr} and \eqref{eq:NBr} see  \cite[Proposition 5]{reiner1997}.
\end{proof}

\begin{cor}
 Let $P^A_n(t), P^B_n(t)$ and $R_n(t)$ be families of polynomials, such that
  \begin{align}
  &P^B_n(t) = t^n R_{n-1}(1/t) + \sum_{k=1}^{n} P_{k-1}^{A}(t) P^{B}_{n-k}(t) \, , \label{eq:exact1} \\ 
  & t^n R_{n}(1/t) = P^B_n(t) + \sum_{k=1}^{n} t^{n-k} P_{k-1}^{A}(t)R_{n-k}(1/t) \, , \label{eq:exact2} \\
  &P^A_n(t) = \sum_{k=1}^{n} t^k P^A_{k-1}(1/t) P^A_{n-k}(t) \, , \label{eq:exact3} \\ 
 \end{align}
 and $P^A_0(t) = 1$, $P^B_0(t) = 1$, $R_0(t) =1$. Then for all $n \geq 0$ 
 $$P^A_n(t) = N^A_n(t), P^B_n(t) =  N^B_n(t) \textrm{ and } R_n(t) = Q_n(t).$$
 \label{cor:nar}
\end{cor}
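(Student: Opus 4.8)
\emph{Strategy.} The plan is to prove the corollary by a uniqueness argument. I would first show that the system \eqref{eq:exact1}--\eqref{eq:exact3} together with the initial values $P^A_0=P^B_0=P_0=1$ has at most one solution among triples of polynomial families. Then it suffices to exhibit one solution, and I claim the Narayana triple $(N^A_n,N^B_n,Q_n)_{n\ge 0}$ is one; establishing this is a purely algebraic consequence of the identities collected in Theorem \ref{th:comb}. Combining the two yields $P^A_n=N^A_n$, $P^B_n=N^B_n$ and $R_n=Q_n=P_n$ for every $n\ge 0$. (One could instead pass to bivariate generating functions and compare with the known ones for Narayana polynomials, but the inductive route is cleaner given that Theorem \ref{th:comb} is already available.)

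\emph{Uniqueness.} I would argue by strong induction on $n$, showing that $P^A_n$, $P^B_n$ and $R_{n-1}$ are forced by data of strictly smaller index. Relation \eqref{eq:exact3} expresses $P^A_n(t)$ through polynomials $P^A_j$ with $j<n$ only, so it is pinned down. For $P^B_n$ and the reversed polynomial $t^nR_{n-1}(1/t)$ one uses \eqref{eq:exact1} and \eqref{eq:exact2} jointly: in both relations every term other than $P^B_n(t)$ and $t^nR_{n-1}(1/t)$ has index $<n$, and the sole occurrence of $R_{n-1}$ inside the sum of \eqref{eq:exact2} is the $k=1$ term $t^{n-1}R_{n-1}(1/t)=t^{-1}\cdot t^nR_{n-1}(1/t)$. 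Hence the two relations form a $2\times 2$ linear system in the unknowns $u=P^B_n(t)$ and $v=t^nR_{n-1}(1/t)$ with determinant $-t^{-1}\neq 0$; solving it determines $v$, from which $R_{n-1}(t)$ is recovered unambiguously since $R_{n-1}$ is assumed to be an honest polynomial, and then $u=P^B_n(t)$ follows from \eqref{eq:exact1}. Together with the base case from the initial data this shows the three families are unique.

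\emph{Verification and the main obstacle.} It remains to check \eqref{eq:exact1}--\eqref{eq:exact3} for $(N^A_n,N^B_n,Q_n)$, which I would do by reducing each to Theorem \ref{th:comb}: substituting \eqref{eq:NQQ} (which rewrites $t^nQ_{n-1}(1/t)$ as $N^B_n(t)-Q_{n-1}(t)$) into \eqref{eq:exact1} should collapse it to \eqref{eq:QNN}; substituting \eqref{eq:NQQ} and \eqref{eq:NAQQ} into \eqref{eq:exact2} and collecting the $Q(1/t)$ contributions should reduce it to \eqref{eq:QNNQ}; and \eqref{eq:exact3} should be \eqref{eq:NAr} for $N^A_n$ transcribed via the palindromy $t^{n+1}N^A_n(1/t)=N^A_n(t)$ of the type-A Narayana polynomials (equivalently, reproved directly by decomposing a non-crossing partition of $[n]$ at the block containing the element $1$). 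The main obstacle is precisely this reversal bookkeeping: the relations \eqref{eq:exact1}--\eqref{eq:exact3} are phrased through reversed polynomials $t^mR_{m-1}(1/t)$ and $P^A_{j}(1/t)$, so matching them against the forward identities of Theorem \ref{th:comb} requires careful tracking of degrees and powers of $t$, and one must also confirm that the small-index cases are consistent with the stated initial conditions so that the induction genuinely starts. Once these conventions are fixed, both halves of the argument are mechanical.
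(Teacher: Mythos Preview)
Your proposal is correct and follows essentially the same route as the paper's own proof: verify that the Narayana triple $(N^A_n,N^B_n,Q_n)$ satisfies \eqref{eq:exact1}--\eqref{eq:exact3} by reducing each relation to identities from Theorem~\ref{th:comb} (\eqref{eq:exact1} via \eqref{eq:NQQ}+\eqref{eq:QNN}, \eqref{eq:exact2} via \eqref{eq:QNNQ}+\eqref{eq:NAQQ}, \eqref{eq:exact3} via \eqref{eq:NAr} and the palindromy of $N^A_n$), and then invoke uniqueness by induction. Your write-up is in fact more explicit than the paper's on the uniqueness step, spelling out the $2\times 2$ linear system; the paper simply asserts ``uniquely (by induction)''.
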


\begin{proof}
First we show that the polynomials  $N^A_n(t), N^B_n(t)$ and $Q_n(t)$ satisfy equations \eqref{eq:exact1}, \eqref{eq:exact2}, \eqref{eq:exact3}.

Combining \eqref{eq:NQQ} and \eqref{eq:QNN} we obtain
$$N^B_n(t) =t^n Q_{n-1}(1/t)+ \sum_{k=1}^{n} N_{k-1}^{A}(t) N^{B}_{n-k}(t) \, .$$

Combining \eqref{eq:QNNQ} and \eqref{eq:NAQQ} we obtain
$$Q_{n}(t) =  t Q_{n-1}(t) + t^n Q_{n-1}(1/t) + \sum_{k=1}^{n} N_{k-1}^{A}(t) Q_{n-k}(t) \, .$$
Substituting $t$ and $1/t$ and multiplying by $t^n$ we get
$$t^n Q_{n}(1/t) =  t^{n-1} Q_{n-1}(1/t) + Q_{n-1}(t) + \sum_{k=1}^{n} t^n N_{k-1}^{A}(1/t) Q_{n-k}(1/t) \, .$$
Note that $N_k^A(t)=t^{k+1}N^A_k(1/t) \textrm{ for all } k > 0$.  Using this we have
$$\sum_{k=1}^{n} t^n N_{k-1}^{A}(1/t) Q_{n-k}(1/t) = t^n Q_{n-1}(1/t)+\sum_{k=2}^{n} t^{n-k} N_{k-1}^{A}(t) Q_{n-k}(1/t) \, ,$$
and using \eqref{eq:NQQ} we finally get
$$t^n Q_{n}(1/t) =  N^B_n(t) + \sum_{k=1}^{n} t^{n-k} N_{k-1}^{A}(t) Q_{n-k}(1/t) \, .$$
The facts $N_k^A(t)=t^{k+1}N^A_k(1/t) \textrm{ for all } k > 0$ and \eqref{eq:NAr} imply 
$$N^A_n(t) = \sum_{k=1}^{n} t^k N^A_{k-1}(1/t) N^A_{n-k}(t) \, .$$

Since the equations \eqref{eq:exact1}, \eqref{eq:exact2}, \eqref{eq:exact3} together with initial conditions determine the polynomial families $P^A_n(t), P^B_n(t)$ and $R_n(t)$ uniquely (by induction),  polynomials $N^A_n(t), N^B_n(t)$ and $Q_n(t)$ represent the only solution of the system.
\end{proof}

\section{Free Cumulants}\label{sec:freecumul}
In this section we prove Theorem \ref{th:cumul}. 
At first we recall some notations and assertions concerning the relations between moments and free cumulants. For more details see \cite{speicher2006}.

Let $\mu$ be some probability distribution with a finite support, $M_n$ be its moments and $c_n$ be its free cumulants.
Then 
\begin{align}
 M_n = \sum_{\pi \in NC(n)} \prod_{B \in \pi} c_{|B|} \, .
 \label{eq:momcumul}
\end{align}

We will apply \eqref{eq:momcumul}  to the distribution $G(x)$. Its odd moments are equal to zero, and its even moments
$$ \widetilde{M}_{2k}(\rho) = \sum_{d \in \mathcal{U}_{2k}} \omega(d) \, . $$
Since odd moments of $G(x)$ are equal to zero, odd free cumulants of $G(x)$ are equal to zero as well. 
We will assign planar chord diagrams to non-crossing partitions and we will prove Theorem \ref{th:cumul} using double counting for \eqref{eq:momcumul}.
\begin{definition}
We say that a contiguous interval $S$ of a chord diagram $d$ is {\it closed}, if for any vertex $i \in S$ there exists a vertex $j \in S$ such that $i \overset{d}{\sim} j$.
 We say that a diagram $d \in \mathcal{U}_{k}$ is {\it decomposable} (see Fig.\ref{fig:cd_decom}a), if for some $l > 0$ there exists a closed contiguous interval consisting of $4l$ vertices, such that the vertex following it is the end of some chord (and we call such an interval {\it bad}). In the other case, we say that the diagram is {\it atomic}. We denote the set of atomic diagrams on $4k$ vertices by $\mathcal{B}_{2k}$ and  the set of atomic diagrams on $4k+2$ vertices by $\mathcal{\widetilde{B}}_{2k+1}$ .
\end{definition}

\begin{lemma}
The $n^{th}$ type B Narayana polynomial of $\rho^2$ is equal to the partition function of $\mathcal{B}_{2n}$:
$$\sum_{k=0}^n \binom{n}{k}^2 \rho^{2k} = \sum_{d \in \mathcal{B}_{2n}} \omega(d) \, .$$
\label{lem:main}
\end{lemma}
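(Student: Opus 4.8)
The plan is to establish a bijection (or a weight-preserving combinatorial identity) between $\mathcal{B}_{2n}$ and a model counted by $N^B_n(\rho^2)$, and the cleanest route is to show that the partition function $B_n(t) := \sum_{d \in \mathcal{B}_{2n}} \omega(d)$, together with the analogous partition function $\widetilde B_n(t) := \sum_{d \in \widetilde{\mathcal{B}}_{2n+1}} \omega(d)$ over atomic diagrams on $4n+2$ vertices and the full partition functions $U_k(\rho), V_k(\rho)$ restricted appropriately, satisfy exactly the recursive system isolated in Corollary \ref{cor:nar}. Concretely, I would first set up the decomposition of an arbitrary diagram $d \in \mathcal{U}_{2k} = \mathcal{D}^0_{2k}$ into its maximal atomic pieces: reading from the first vertex, $d$ either is atomic, or has a shortest closed interval of $4l$ vertices followed by the start of a fresh chord, and peeling this off expresses $\widetilde M_{2k}(\rho) = \sum_{d \in \mathcal{U}_{2k}}\omega(d)$ as a sum over compositions with atomic blocks — i.e., the generating function of $\{\widetilde M_{2k}\}$ is the "geometric series" in the generating function of $\{B_n\}$. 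This mirrors the moment–cumulant relation \eqref{eq:momcumul} specialized to the interval/atomic structure, and identifies $B_n(\rho^2)$ as the contribution of irreducible pieces.

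Next I would analyze the internal structure of a single atomic diagram $d \in \mathcal{B}_{2n}$ by looking at the chord emanating from vertex $1$. Because vertex $1$ is black and the diagram is planar, this chord lands on some vertex of the appropriate parity and splits the remaining $4n-2$ vertices into an "inside" part and an "outside" part; the inside part is itself a chord diagram of type $\mathcal{V}$ (colors shifted, as in the proof of Lemma \ref{lem:recrel}), while the outside is again of type $\mathcal{U}$ or $\overline{\mathcal{U}}$. The atomicity constraint forces the outside part to not contain a closed $4l$-block abutting a new chord, so recursively one obtains relations linking $B_n$, $\widetilde B_n$, and the $V_k$'s of the same shape as \eqref{eq:NBr}, \eqref{eq:NQQ}, \eqref{eq:QNN} after the substitution $t \mapsto \rho^2$ and with the weight $\rho$ produced exactly by a monochromatic chord (one factor of $\rho$ per same-color pairing, matching the exponent $k$ in $\binom{n}{k}^2\rho^{2k}$ — each such chord is counted with multiplicity two in the Narayana exponent convention, hence $\rho^{2k}$ tracks $k$ monochromatic chords on each "side"). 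Matching initial conditions ($B_0 = 1$, corresponding to the empty atomic diagram) and invoking Corollary \ref{cor:nar} then forces $B_n(\rho^2) = N^B_n(\rho^2) = \sum_{k=0}^n \binom{n}{k}^2 \rho^{2k}$.

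An alternative, more direct approach I would keep in reserve: construct an explicit weight-preserving bijection between $\mathcal{B}_{2n}$ and pairs $(A, B)$ where $A, B$ are noncrossing structures each of "size $k$" for some $k$, so that the count becomes $\sum_k (\text{number of }A)\cdot(\text{number of }B)$ with both factors equal to $\binom{n}{k}$; the $\binom{n}{k}^2$ shape strongly suggests that an atomic diagram encodes, on its "black half" and "white half" respectively, a choice of $k$-subset (the positions of the $k$ monochromatic chords on each side) from an $n$-element ground set, with the remaining chords forced by planarity and atomicity. The bijection of Reiner (\cite{reiner1997}, Prop.\ 3) and the map $\abs$ used in the proof of Theorem \ref{th:comb} are the natural tools for making this precise.

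The main obstacle I anticipate is the bookkeeping in the second step: correctly identifying, under the color-shift conventions distinguishing $\mathcal{U}_k$ from $\mathcal{V}_k$ and their barred versions, which sub-diagram type appears inside versus outside the first chord of an atomic diagram, and checking that the atomicity condition translates precisely into the combinatorial restriction that makes the recursion close to the system \eqref{eq:exact1}–\eqref{eq:exact3} rather than to some larger system. Getting the parity of the split point and the placement of the weight-$\rho$ factor exactly right — so that the final exponent is $2k$ and not $k$ or $2k\pm 1$ — is where the argument must be done carefully; once the recursion is pinned down, Corollary \ref{cor:nar} does the rest.
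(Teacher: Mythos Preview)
Your approach is the paper's: decompose an atomic diagram via its first chord, derive a closed recursion system, and invoke Corollary~\ref{cor:nar}. The one substantive correction you will need is that the atomicity constraint lands on the \emph{inside} piece (not the outside), and to close the system the paper introduces two further auxiliary families $\mathcal{A}_{2k+1}\subset\mathcal{U}_{2k+1}$ and $\widetilde{\mathcal{A}}_{2k+1}\subset\mathcal{V}_{2k+1}$ (diagrams with no closed terminal $4l$-segment), so that the four partition functions $B_{2k},\widetilde{B}_{2k+1},A_{2k+1},\widetilde{A}_{2k+1}$ together satisfy exactly the relations \eqref{eq:exact1}--\eqref{eq:exact3}; your pair $B_n,\widetilde{B}_n$ alone will not close.
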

\begin{proof}
 Let us define two auxiliary sets of chord diagrams $\mathcal{A}_{2k+1}$ and  $\widetilde{\mathcal{A}}_{2k+1}$. The set $\mathcal{A}_{2k+1}$ contains all atomic diagrams from $\mathcal{U}_{2k+1}$, such that for any $l >0$ an interval $(4k-4l+1, 4k+2)$ is not closed. The set $\widetilde{\mathcal{A}}_{2k+1}$ is the subset of $\mathcal{V}_{2k+1}$ with the same restrictions. Denote by $B_{2k}(\rho)$, $\widetilde{B}_{2k+1}(\rho)$ $A_{2k+1}(\rho)$ and $\tilde{A}_{2k+1}(\rho)$ the partition functions of $\mathcal{B}_{2k}$, $\mathcal{\widetilde{B}}_{2k+1}$, $\mathcal{A}_{2k+1}$ and  $\widetilde{\mathcal{A}}_{2k+1}$, respectively.
 Consider the first chord of a diagram $d \in \mathcal{B}_{2k}$. It can connect the $1^{st}$ vertex with either the second vertex or the $4i^{th}$ vertex, where $0< i \leq k$. In the first case, the rest of the diagram belongs to $\mathcal{\widetilde{B}}_{2k-1}$, in the second case, the chord $(1,4i)$ splits the diagram into $d_1 \in  \widetilde{\mathcal{A}}_{2i-1}$ and $d_2 \in  {\mathcal{B}}_{2k-2i}$.   Using the same standard arguments for  $\mathcal{\widetilde{B}}_{2k+1}$, $\mathcal{A}_{2k+1}$ and  $\widetilde{\mathcal{A}}_{2k+1}$, we obtain:
 \begin{align}
  &B_{2k}(\rho) = \rho \widetilde{B}_{2k-1}(\rho) + \sum_{i =1}^{k} \tilde{A}_{2i-1}(\rho) B_{2k-2i}(\rho) \, ,\nonumber \\
  &\widetilde{B}_{2k+1}(\rho) = \rho B_{2k}(\rho) + \sum_{i =1}^{k} \tilde{A}_{2i-1}(\rho) \widetilde{B}_{2k+1-2i}(\rho) \, ,\nonumber \\
  &A_{2k+1}(\rho) = \sum_{i=1}^{k} \tilde{A}_{2i-1}(\rho) A_{2k-2i+1}(\rho)  \, , \nonumber \\
  &\tilde{A}_{2k+1}(\rho) = \rho\sum_{i=1}^{k} {A}_{2i-1}(\rho) \tilde{A}_{2k-2i+1}(\rho)  \, .
  \label{eq:narayana}
 \end{align}
 
 These  equations \eqref{eq:narayana} are equivalent to the equations in Corollary \ref{cor:nar}, and so polynomials $A_k(\rho)$, $B_k(\rho)$ coincide with the corresponding Narayana polynomials:
 \begin{align*}
  A_{2n+1}(\rho) &= \rho^{2n+1}N^{A}_n\left(\frac{1}{\rho^2}\right) \, , \\
  \tilde{A}_{2n+1}(\rho) &= N^{A}_n(\rho^2) \, , \\
  \widetilde{B}_{2n+1}(\rho) &=  \rho^{2n+1} Q_n\left(\frac{1}{\rho^2}\right) \, , \\
  B_{2n}(\rho) &= N^{B}_n(\rho^2) \, . \\
 \end{align*}
\end{proof}

  \begin{figure}[ht]
   \centering
   \includegraphics[width=\textwidth]{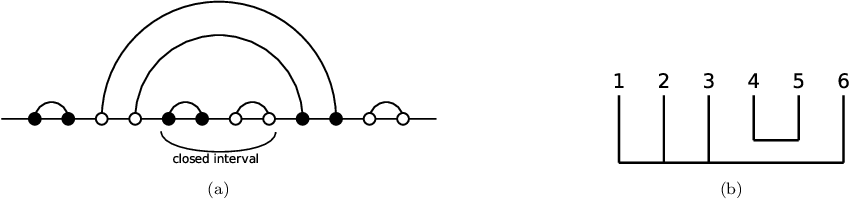}
  \caption{(a) Decomposable chord diagram. The interval $S = \{5,6,7,8\}$ is the only bad interval. (b) The resulting non-crossing partition.}
  \label{fig:cd_decom}
  \end{figure}


To finish the proof of Theorem \ref{th:cumul} we assign to any chord diagram $d \in  \mathcal{U}_{2n}$ a non-crossing partition $\pi \in NC(2n)$. If $d$ is atomic, then it corresponds to a non-crossing partition with one block of size $2n$. If $d$ is decomposable, then consider its leftmost (maximal by inclusion) bad interval $S$. Let $S$ have $4l$ vertices and let the $j^{th}$ chord end in the vertex directly following the interval $S$. Consider the interval $S$ and its complement $S^c$ as separate diagrams. The non-crossing partition, corresponding to $d$, splits into the non-crossing partition 
$$\lambda \in NC(\{1,2,\dots,j,j+2l+1,\dots,2n\}) \, ,$$
corresponding to $S^c$, and the non-crossing partition 
$$\sigma \in NC(\{j+1,j+2,\dots, j+2l\})\, ,$$ 
corresponding to $S$. The separating process results in a collection of atomic diagrams (after a finite number of steps) (See Fig.\ref{fig:cd_decom} for an example. In this example $l=1$ and $j=3$). 
From this decomposition rule, Lemma \ref{lem:main}, and Equation \eqref{eq:momcumul} follows, that corresponding free cumulants of the distribution $G(x)$ are Narayana polynomials of type $B$. Since $G(x)$ has a finite support, it is uniquely determined by its free cumulant.

According to Reiner \cite{reiner1997}, the generating function $f(x,t)$ of Narayana polynomials of type $B$ is given by 
$$f(x,t) = \sum_{n=0}^\infty N^B_n(t) x^n = \frac{1}{\sqrt{(1-(t-1) x)^2-4 x}} \, . $$
By the definition of free cumulants, we have
$$\mathcal{R}_G(z) = \frac{1}{z}\left(f(z^2,\rho^2)-1\right) = \frac{1}{z}\left(\frac{1}{\sqrt{\left((\rho^2-1) z^2-1\right)^2-4 z^2}}-1\right) \, .$$
Theorem \ref{th:cumul} is proved.

\section{Spectral Density} \label{sec:density}
The Cauchy transform $s_G(z)$ of the distribution $G$ can be found as a solution of 
$$\mathcal{R}_G (s_G(z))+\frac{1}{s_G(z)}=z \, ,$$
or, explicitly,
\begin{align}
 \frac{1}{s_G(z) \sqrt{\left(\rho^2-1\right)^2 s^4_G(z) -2(\rho^2+1)s^2_G(z)+1}} = z \, .
 \label{eq:s_g}
\end{align}

For the Cauchy transform $s_\mu(z)$ of any probability measure $\mu$ the following properties hold:
\begin{align}
\Im(s_\mu(z)) < 0, \textrm{ if } \Im(z) >0 \, ,  \label{eq:cond1}  \\
\lim_{y \to \infty} \sqrt{-1}y s_\mu(x+\sqrt{-1}y) = 1 \, .  \label{eq:cond2}
\end{align}
Equation \eqref{eq:s_g} has a unique solution which satisfies \eqref{eq:cond1}, \eqref{eq:cond2}. The explicit formulas of $s_G(z)$ for the cases $\rho = 0$ (Fuss--Catalan distribution) and $\rho = 1$ (squared Marchenko--Pastur distribution) are well known (see, e.g., \cite{zyczkowski2011}). In the general case, the solution can be obtained, but does not have a compact form. It still allows us to obtain a density of the distribution $G$ and the distribution $F$ at least numerically (see Fig.~\ref{fig:density}).
Indeed, the density $d_G(x)$ of the distribution $G$ can be obtained as
$$d_G(x) = \lim_{y \to 0} \frac{\Im(s_\mu(x+\sqrt{-1}y))}{\pi} \, ,$$
and the density $d_F(x)$ of the distribution $F$ can be obtained as
$$d_F(x) = \frac{d_G(\sqrt{x})}{\sqrt{x}} \, .$$
  \begin{figure}[ht]
  \centering
    \includegraphics[width=0.99\textwidth]{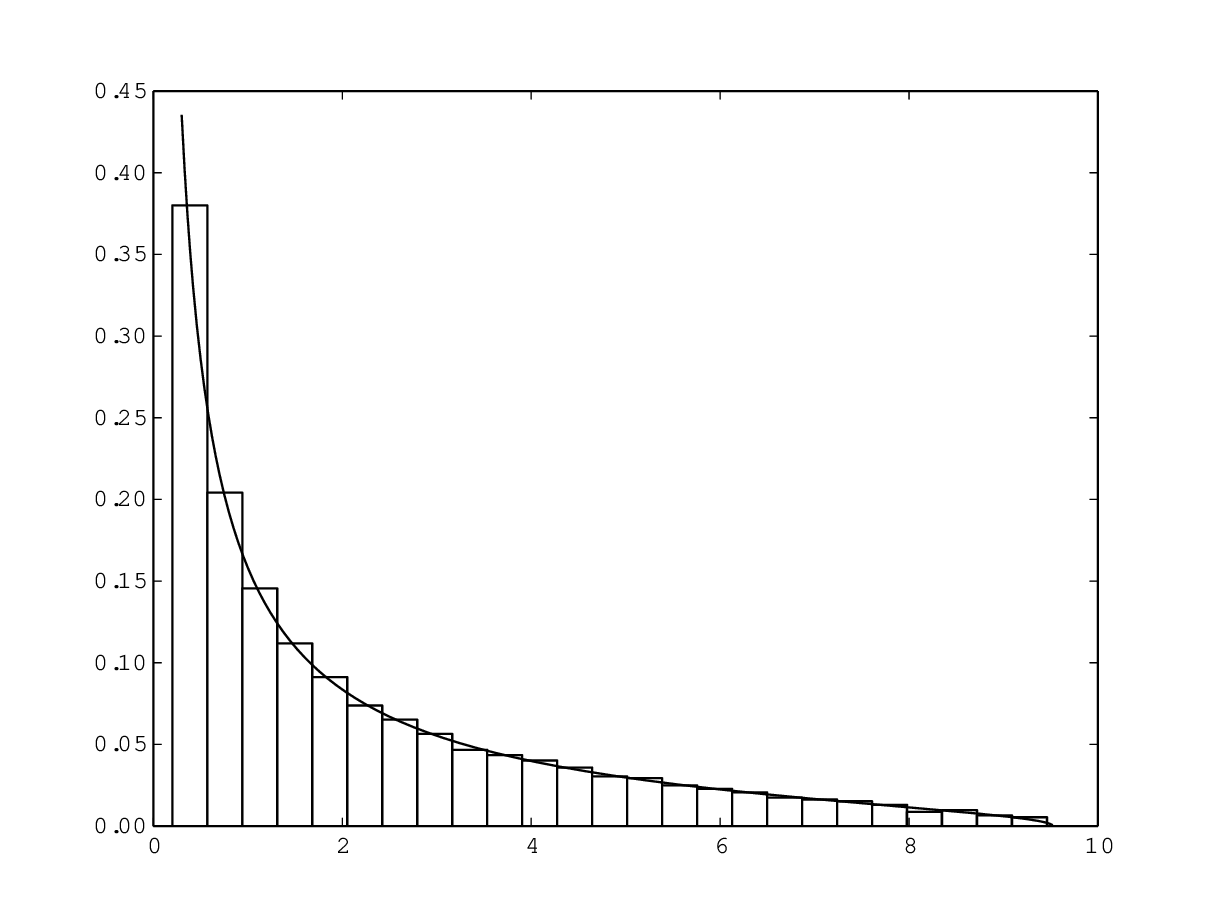}
  \caption{The density $d_F(x)$ and a histogram of the eigenvalues of $X^2 X^{*2}$, where $X$ is a $2500\times2500$ elliptic random matrix ($\rho = \frac{1}{2}$).}
  \label{fig:density}
 \end{figure}

\section{Conclusions}
In the current paper, we consider an asymptotic distribution of the singular values of the matrix $X^2$, where $X$ is an elliptic random matrix. While singular value distributions of powers and products of random matrices with independent entries have been studied extensively (see \cite{burda2010,alexeev2010c,penson2011,mlotkowski2012,akemann2013,lenczewski2014,forrester2014,forrester2015,speicher2015}, just to name a few), the recent results for elliptic random matrices (\cite{orourke2014a,orourke2014b,goetze2014}) concern mostly eigenvalue statistics. The obtained asymptotic distribution $F$ is a new generalization of Fuss--Catalan distribution (see \cite{mlotkowski2010,arizmendi2012,mlotkowski2014,lenczewski2013}). We find moments of this distribution, and prove that its free cumulants are Narayana Polynomials of type B. This means that for squares of elliptic random matrices, type B Catalan structures (say, non-crossing partitions of type B) play the same role as type A Catalan structures play for  products of two rectangular random matrices with independent entries.

While the main part of the paper is devoted to the combinatorial properties of the distribution $F$, our technique also allows to derive the density of $F$ (see Section \ref{sec:density}). 

\section*{Acknowledgements}
 The authors are grateful for fruitful discussions with Pavel Galashin and Mikhail Basok.
The work of NA is supported by the grant of Russian Scientific Foundation 14-11-00581. The work of AT is supported by SFB 701 “Spectral Structures and Topological Methods in Mathematics” University of Bielefeld, by RFBR grant RFBR 14-01-00500 and by Program of Fundamental Research Ural Division of RAS, Project 12-P-1-1013.

\bibliographystyle{spmpsci}
\bibliography{randmatr.bib}

\begin{thebibliography}{10}
\providecommand{\url}[1]{{#1}}
\providecommand{\urlprefix}{URL }
\expandafter\ifx\csname urlstyle\endcsname\relax
  \providecommand{\doi}[1]{DOI~\discretionary{}{}{}#1}\else
  \providecommand{\doi}{DOI~\discretionary{}{}{}\begingroup
  \urlstyle{rm}\Url}\fi

\bibitem{akemann2013}
Akemann, G., Ipsen, J.R., Kieburg, M.: Products of rectangular random matrices:
  singular values and progressive scattering.
\newblock Physical Review E \textbf{88}(5), 052,118 (2013)

\bibitem{alexeev2010a}
Alexeev, N., G{\"o}tze, F., Tikhomirov, A.: Asymptotic distribution of singular
  values of powers of random matrices.
\newblock Lithuanian mathematical journal \textbf{50}(2), 121--132 (2010)

\bibitem{alexeev2010c}
Alexeev, N., G{\"o}tze, F., Tikhomirov, A.: On the asymptotic distribution of
  singular values of products of large rectangular random matrices.
\newblock arXiv preprint arXiv:1012.2586  (2010)

\bibitem{alexeev2010b}
Alexeev, N., G{\"o}tze, F., Tikhomirov, A.: On the singular spectrum of powers
  and products of random matrices.
\newblock Doklady mathematics \textbf{82}(1), 505--507 (2010)

\bibitem{arizmendi2012}
Arizmendi, O., Vargas, C.: Product of free random variables and k-divisible
  noncrossing partitions.
\newblock {Electronic Communications in Probability} \textbf{17} (2012)

\bibitem{banica2011}
Banica, T., Belinschi, S.T., Capitaine, M., Collins, B.: {Free Bessel laws}.
\newblock Canad. J. Math \textbf{63}(1), 3--37 (2011)

\bibitem{biane2003}
Biane, P., Goodman, F., Nica, A.: {Non-crossing cumulants of type B}.
\newblock {Transactions of the American Mathematical Society} \textbf{355}(6),
  2263--2303 (2003)

\bibitem{burda2010}
Burda, Z., Jarosz, A., Livan, G., Nowak, M.A., Swiech, A.: Eigenvalues and
  singular values of products of rectangular gaussian random matrices.
\newblock Phys. Rev. E \textbf{82}, 061,114 (2010).
\newblock \doi{10.1103/PhysRevE.82.061114}

\bibitem{fomin2005}
Fomin, S., Reading, N.: Root systems and generalized associahedra.
\newblock arXiv preprint math/0505518  (2005)

\bibitem{forrester2014}
Forrester, P.J.: Eigenvalue statistics for product complex wishart matrices.
\newblock Journal of Physics A: Mathematical and Theoretical \textbf{47}(34),
  345,202 (2014)

\bibitem{forrester2015}
Forrester, P.J., Liu, D.Z.: Raney distributions and random matrix theory.
\newblock Journal of Statistical Physics \textbf{158}(5), 1051--1082 (2015).
\newblock \doi{10.1007/s10955-014-1150-4}

\bibitem{girko1985}
Girko, V.: The elliptic law.
\newblock {Teoriya Veroyatnostei i ee Primeneniya} \textbf{30}(4), 640--651
  (1985)

\bibitem{girko2006strong}
Girko, V.: {The Strong Elliptic Law. Twenty years later. Part I}.
\newblock {Random Operators and Stochastic Equations} \textbf{14}(1), 59--102
  (2006)

\bibitem{goetze2014}
G{\"o}tze, F., Naumov, A., Tikhomirov, A.: On a generalization of the elliptic
  law for random matrices.
\newblock Acta Physica Polonica, Series B. \textbf{46}(9), 1737--1747 (2015)

\bibitem{goetze2013}
G{\"o}tze, F., Naumov, A., Tikhomirov, A.: On minimal singular values of random
  matrices with correlated entries.
\newblock Random Matrices: Theory and Applications \textbf{4}(2), 1--30 (2015)

\bibitem{lenczewski2014}
Lenczewski, R.: Limit distributions of random matrices.
\newblock Advances in Mathematics \textbf{263}, 253--320 (2014)

\bibitem{lenczewski2013}
Lenczewski, R., Salapata, R.: {Multivariate Fuss-Narayana polynomials and their
  application to random matrices}.
\newblock Electronic Journal of Combinatorics \textbf{20}(2) (2013)

\bibitem{liu2011}
Liu, D.Z., Song, C., Wang, Z.D.: {On explicit probability densities associated
  with Fuss-Catalan numbers}.
\newblock {Proceedings of the American Mathematical Society} \textbf{139}(10),
  3735--3738 (2011)

\bibitem{pastur1967}
Marchenko, V.A., Pastur, L.A.: Distribution of eigenvalues for some sets of
  random matrices.
\newblock Matematicheskii Sbornik \textbf{114}(4), 507--536 (1967)

\bibitem{mlotkowski2010}
Mlotkowski, W.: Fuss-catalan numbers in noncommutative probability.
\newblock Documenta Mathematica \textbf{15}, 939--955 (2010)

\bibitem{mlotkowski2014}
Mlotkowski, W., Penson, K.A.: Probability distributions with binomial moments.
\newblock Infinite Dimensional Analysis, Quantum Probability and Related Topics
  \textbf{17}(02), 1450,014 (2014).
\newblock \doi{10.1142/S0219025714500143}

\bibitem{mlotkowski2012}
Mlotkowski, W., Penson, K.A., Zyczkowski, K.: {Densities of the Raney
  distributions}.
\newblock {Documenta Mathematica} \textbf{18}, 1573--1596 (2013)

\bibitem{naumov2012}
Naumov, A.: Elliptic law for random matrices.
\newblock Vestnik Moskov. Univ. Ser. XV Vychisl. Mat. Kibernet. (1), 31--38
  (2013).
\newblock \doi{10.3103/S027864191001005X}

\bibitem{nguyen2014}
Nguyen, H.H., O'Rourke, S.: The elliptic law.
\newblock International Mathematics Research Notices p. rnu174 (2014)

\bibitem{speicher2006}
Nica, A., Speicher, R.: Lectures on the combinatorics of free probability,
  vol.~13.
\newblock Cambridge University Press (2006)

\bibitem{orourke2014b}
O'Rourke, S., Renfrew, D.: Central limit theorem for linear eigenvalue
  statistics of elliptic random matrices.
\newblock Journal of Theoretical Probability pp. 1--71 (2014)

\bibitem{orourke2014a}
O'Rourke, S., Renfrew, D., Soshnikov, A., Vu, V.: Products of independent
  elliptic random matrices.
\newblock Journal of Statistical Physics pp. 1--31 (2014)

\bibitem{penson2011}
Penson, K.A., {\.Z}yczkowski, K.: {Product of Ginibre matrices: Fuss-Catalan
  and Raney distributions}.
\newblock {Physical Review E} \textbf{83}(6), 061,118 (2011)

\bibitem{reiner1997}
Reiner, V.: Non-crossing partitions for classical reflection groups.
\newblock {Discrete Mathematics} \textbf{177}(1), 195--222 (1997)

\bibitem{speicher2015}
Speicher, R.: Polynomials in asymptotically free random matrices.
\newblock arXiv preprint arXiv:1505.04337  (2015)

\bibitem{oeis}
{The OEIS Foundation}: {The On-Line Encyclopedia of Integer Sequences}.
\newblock Published electronically at http://oeis.org  (2015)

\bibitem{voiculescu1992}
Voiculescu, D.V., Dykema, K.J., Nica, A.: Free random variables.
\newblock 1. American Mathematical Soc. (1992)

\bibitem{zvonkin1997}
Zvonkin, A.: Matrix integrals and map enumeration: an accessible introduction.
\newblock {Mathematical and Computer Modelling} \textbf{26}(8), 281--304 (1997)

\bibitem{zyczkowski2011}
Zyczkowski, K., Penson, K.A., Nechita, I., Collins, B.: Generating random
  density matrices.
\newblock Journal of Mathematical Physics \textbf{52}(6), 062,201 (2011)

\end{thebibliography}

\end{document}